\numberwithin{equation}{section}
\numberwithin{figure}{section}
\newtheorem{theorem}{Theorem}
\newtheorem{corollary}{Corollary}
\newtheorem{lemma}{Lemma}
\newtheorem{definition}{Definition}
\newtheorem{proposition}{Proposition}
\numberwithin{theorem}{section}
\numberwithin{lemma}{section}
\numberwithin{corollary}{section}
\numberwithin{definition}{section}
\numberwithin{example}{section}
\numberwithin{proposition}{section}
\def\cA{{\mathcal A}}
\def\cE{{\mathcal E}}
\DeclareMathOperator{\im}{im}
\DeclareMathOperator{\Sym}{Sym}
\DeclareMathOperator{\End}{End}
\DeclareMathOperator{\Ker}{Ker}
\DeclareMathOperator{\Ima}{Im}
\DeclareMathOperator{\PSL}{PSL}
\newfont{\german}{eufm10}
\title{\textbf{The Global Sections of Chiral de Rham Complexes on Closed Complex Curves}}
\begin{document}
\pagestyle{plain}
\title{The Global Sections of Chiral de Rham Complexes on Closed Complex Curves}
\author{Bailin Song}
\address{School of Mathematical Sciences, University of Science and Technology of China, Hefei, 230026, People's Republic of China}
\email{bailinso@ustc.edu.cn}
\thanks{B. Song is supported  by National Natural Science Foundation of China Grant \#12171447.}	
\subjclass[2020]{17B69 (Primary), 17B66, 53C35 (Secondary)}
\author{Wujie Xie}
\address{School of Mathematical Sciences, University of Science and Technology of China, Hefei, 230026, People's Republic of China}
\email{xiewujie@mail.ustc.edu.cn}

\begin{abstract}
	The space of global sections of the chiral de Rham complex on any closed complex curve with genus $g \ge2$ is calculated.
\end{abstract}	

\maketitle 
	\section{Introduction}
	The chiral de Rham complex $\Omega_{X}^{ch}$ is a sheaf of vertex superalgebras introduced  by Malikov, Schechtman and Vaintrob in \cite{MSV}, which can be defined on either smooth manifolds, complex analytic manifolds or nonsingular algebraic varieties. It is graded by conformal weight and its weight zero part coincides with the ordinary de Rham sheaf. This sheaf  has attracted
	significant attention in both the physics and mathematics literature. For example, in \cite{Ka}, Kapustin has shown that if X is a Calabi-Yau manifold, the cohomology of this sheaf can be identified with the infinite-volume limit of the half-twisted sigma model defined by E. Witten;  in \cite{Bo}, Borisov presented an approach to  toric Mirror Symmetry by the chiral de Rham complex. 
	
	The space of global
	sections $\Gamma(X, \Omega_X^{ch})$ is always a vertex superalgebra, and it is known to have extra structure
	when $X$ is endowed with geometric structures. For example, when $X$ is Kähler or hyperkähler,
it has an $\mathcal N = 2$ structure or $\mathcal N = 4$ structure, respectively \cite{BHS}. Recently in \cite{S3, LS}, Linshaw and the first author gave a complete description of $\Gamma(X, \Omega_X^{ch})$ for any compact Ricci-flat Kähler manifold $X$. In \cite{D}, for any congruence subgroup $G \subset S L (2, \mathbb{R})$,
Dai constructed a basis of the $G$-invariant global sections of the chiral de Rham complex on the upper half plane, which are holomorphic at the cusps. The vertex operations are determined by a modification of the Rankin-Cohen brackets of modular forms. Further in \cite{DS}, it is shown that if the global sections are allowed to be meromorphic at cusp, the space of $G$-invariant global sections will be a simple vertex algebra. There have been some descriptions about the structure of $H^0(X, \Omega^{ch}_X)$ when $X$ is a closed complex curve with genus $0$ or $1$. In the case $g=0$, $X$ is of constant positive curvature and a description of $H^0(X, \Omega^{ch}_X)$ through $\hat{sl_2}$-modules was given in \cite{MS2}. In the case $g=1$, $X$ is of $0$ curvature and $H^0(X, \Omega^{ch}_X)$ is just a $\beta\gamma-bc$ system. The case $g\ge 2$ has never been calculated yet, with the curvature of $X$ being constant negative.

When $X$ is a compact Kähler manifold, a filtration $(\mathcal{F}_{s})_{s\in \mathbb{Z}}$ of $H^0(X, \Omega^{ch}_X)$ was in fact constructed in \cite{S3}, such that $\bigoplus_{s}\mathcal{F}_{s}/\mathcal{F}_{s-1}$ is isomorphic to the space of holomorphic sections of 
 $SW(\bar{T}^*X)$, where $SW(\bar{T}^*X)$ is isomorphic to $$\Sym^*(\bigoplus^{\infty}_{n=1}\bar{T}X)\bigotimes \Sym^*(\bigoplus^{\infty}_{n=1}\bar{T}^*X)\bigotimes \wedge^*(\bigoplus^{\infty}_{n=1}\bar{T}X)\bigotimes \wedge^* (\bigoplus^{\infty}_{n=1}\bar{T}^*X)$$as antiholomorphic vector bundles. Here $\bar{T}X$ and $\bar{T}^*X$ are antiholomorphic tangent and cotangent bundle of $X$, respectively. In this way $H^0(X, \Omega_X^{ch})$ is described in terms of holomorphic sections of $SW(\bar{T}^*X)$. More technically, in \cite {S3}, a soft resolution $(\Omega^{ch,*}_X, \bar{\partial})$ of the chiral de Rham complex $\Omega_X^{ch}$ was constructed, then an isomorphism of complexes of sheaves from $(\Omega^{0,*}_X(SW(\bar{T}^*X))(X), \bar{D})$ to $(\Omega^{ch,*}_X, \bar{\partial})$ was constructed, where $\bar{D}$ is an elliptic operator with $\bar{D}=\bar{\partial}^{\prime}+F_1+F_2+\ldots$ in which $\bar{\partial}^{\prime}$ is the antiholomorphic part of the Chern connection and $F_i$'s are first order differential operators. In this formalism, the calculation of $H^0(X, \Omega_X^{ch})$ is equivalent to the calculation of $H^0(\Omega^{0,*}_X(SW(\bar{T}^*X))(X), \bar{D})$(Theorem \ref{thm:iso2}).

The structure of $H^0(X, \Omega^{ch}_X)$ when $X$ is a compact Ricci-flat Kähler manifold was calculated in [\cite {S3}]. In this paper, we  extend the method developed in \cite {S3} to a class of compact non Ricci-flat Kähler manifolds, that is, the closed locally symmetric spaces, and calculate the space of global sections of chiral de Rham complex on any closed curves with genus $g\geq 2$ (Theorem \ref{thm:global}).

 The paper is organized as follows:  in section \ref{section 2},  we introduce the chiral de Rham complex; in section \ref{section3},  we introduce the method of calculating the global sections of chiral de Rham complex in \cite{S3}  and apply the method to Hermitian locally symmetric spaces. In section \ref{section4}, we  calculate the space of global sections for any closed complex curve $X$ with genus $g \geq 2$.
	
	\section{Chiral de Rham Complex}\label{section 2}
	\subsection{Vertex algebras}
	In this paper, we will follow the formalism of vertex algebras developed in \cite{Kac}. 
	
	A formal distribution is the following formal expressions
	$$\sum_{m,n,\ldots \in \mathbb{Z}}a_{m,n,\ldots}z^m w^n\ldots,$$
	where $a_{m,n,\ldots}$ are elements of a vector space $U$ over $\mathbb{C}$, and $z, w, \ldots$ are indeterminates. These expressions form a vector space over $\mathbb{C}$ denoted by $U[[z,z^{-1},w,w^{-1},\ldots]]$.
	
	A vertex
	algebra is the data $(\cA, Y, L_{-1}, 1)$. In this notation,
	\begin{enumerate} 
		\item $\cA$ is a $\mathbb Z_2$-graded vector space over $\mathbb C$. The $\mathbb Z_2$-grading is called parity, and $|a|$ denotes the parity of a homogeneous element $ a \in \cA$. 
		\item $Y$ is an even linear map
		$$Y : \cA \to \End (\cA) [[z, z^{-1}]],\qquad Y(a) = a(z) = \sum_{n\in \mathbb Z}a_{(n)}z^{-n-1}.$$ Here $z$ is a formal variable and $a(z)$ is called the field corresponding to $a$.  
		\item $1 \in \cA$ is called the vacuum vector. 
		\item $L_{-1}$ is an even endomorphism of $\cA$.
	\end{enumerate}
	They
	satisfy the following axioms:
	\begin{itemize}
		\item
		\textsl{Vacuum axiom.} $L_{-1}1 =0$; $1(z)= Id$; for $a\in \cA$, $n\geq 0$, $a_{(n)}1=0$ and $a_{(-1)}1=a$;
		\item \textsl{Translation invariance axiom.} For $a\in \cA$, $$[L_{-1}, Y(a)] =\partial a(z);$$
		\item \textsl{Locality axiom.} Let $z,w$ be formal variables. For homogeneous $a, b \in \cA$, $$(z-w)^k [a(z), b(w)]=0$$ for some $k\geq 0$, where $[a(z), b(w)] = a(z) b(w) - (-1)^{|a| |b|} b(w) a(z)$.
	\end{itemize}
	For $a,b\in \cA$, $n\in \mathbb Z_{\geq 0}$, $a_{(n)}b$ is their $n^{\text{th}}$ product and their operator product expansion (OPE) is
	$$a(z)b(w)\sim \sum_{n\geq 0} (a_{(n)}b)(w)(z-w)^{-n-1}.$$
	The Wick product of $a(z)$ and $b(z)$ is
	$:a(z)b(z):  \ =(a_{(-1)}b)(z)$. The other negative products are given by
	$$:\partial^na(z)b(z):\ =n!(a_{(-n-1)}b)(z).$$
	
	Note that for any formal distributions $ a(z) = \sum_{n\in \mathbb Z}a_{(n)}z^{-n-1}$ and $b(w) = \sum_{n\in \mathbb Z}b_{(n)}w^{-n-1}$ which are not necessarily fields of vertex algebras, the product $a_{(n)}b$, $n \in \mathbb{Z}$ can still be defined.

	\subsection{$\beta\gamma-bc$ system.}
	Let $V$ be a $d$-dimensional complex vector space.
	The $\beta\gamma$-system  $\mathcal S(V)$ and $bc$-system $\cE(V)$ were introduced in \cite{FMS}.  A $\beta\gamma-bc$ system $\mathcal{W}(V)=\mathcal S(V)\otimes \mathcal E(V)$ is a vertex algebra generated by even elements $\beta^i$, $\gamma^i$, and odd elements $b^i$, $c^i$, where $i$ ranges from $1$ to $d$. The only nontrivial OPEs among them are:
		\begin{equation*}
			\beta^i(z)\gamma^j(w) \sim \delta_{ij}(z-w)^{-1},\quad 
			b^i(z)c^j(w) \sim \delta_{ij}(z-w)^{-1}.
		\end{equation*}
	
		Let $\mathcal{W}_{+}(V)$ be the subalgebra of $\mathcal{W}(V)$ generated by $\beta^{i}$, $\alpha^{i}=\partial\gamma^{i}$, $b^{i}$ and $c^{i}$. 
	It is a topological vertex algebra with the following four fields:
		\begin{equation}\label{eqn:QLJG}
			\begin{aligned}
				&Q(z)=\sum^{d}_{i=1}:\beta^i(z)c^i(z):,\quad L(z)=\sum^{d}_{i=1}(:\beta^i(z)\partial\gamma^i(z):-:b^i(z)\partial{c^i}(z):),           \\
				&J(z)=-\sum^{d}_{i=1}:b^i(z)c^i(z):,\quad G(z)=\sum^{d}_{i=1}:b^i(z)\partial\gamma^i(z):.
			\end{aligned}
		\end{equation} 
The subalgebra $\mathcal{W}_{+}(V)$ is graded with conformal weights $k$ and fermionic numbers $l$:
		\begin{equation*} 
			\mathcal{W}_{+}(V)=\bigoplus_{k\in \mathbb{Z}_{\geq 0}}\bigoplus_{l\in \mathbb{Z}}\mathcal{W}_{+}(V)[k,l].
		\end{equation*}
		Here $\mathcal{W}_{+}(V)[k,l]=\lbrace A\in \mathcal{W}_{+}(V)|L_{(1)}A=kA, J_{(0)}A=lA\rbrace$.
		
	\subsection{ Chiral de Rham Algebra.}
Let $X$ be a complex manifold of dimension $d$. For a local coordinate system  $(U, \gamma^1, \ldots, \gamma^d)$ of $X$, $\Omega^{ch}_{X}(U)$ is given by:
		\begin{equation*}
			\Omega^{ch}_{X}(U):=\mathcal{W}(\mathbb{C}^{d})\otimes_{\mathbb{C}[\gamma^1_{(-1)}, \ldots, \gamma^d_{(-1)}]}\mathcal{O}(U).
		\end{equation*}
		Here the action of $\gamma^i_{(-1)}$ on the analytic function ring $\mathcal{O}(U)$ is identified with the product of $\gamma^i$.
	
		Then $\Omega^{ch}_{X}(U)$ is a vertex algebra generated by $\beta^i(z)$, $b^i(z)$, $c^i(z)$, and $f(z)$, $f \in \mathcal{O}(U)$, with relations:
		\begin{equation*}
			:f(z)g(z):=fg(z),\quad f,g \in \mathcal{O}(U).
		\end{equation*} 
		The only nontrivial OPEs among these generators are:
		\begin{equation*}
			\beta^i(z)f(w)\sim \dfrac{\frac{\partial f}{\partial \gamma^i}(z)}{z-w},\quad
			b^i(z)c^j(w)\sim \dfrac{\delta_{ij}}{z-w}.
		\end{equation*}
	
	Let $\tilde{\gamma}^1, \ldots, \tilde{\gamma}^d$ be a new coordinate system on $U$ with the relations:
	\begin{equation*}
		\tilde{\gamma}^i=f^i(\gamma^1, \dots, \gamma^d), \quad \gamma^i=g^i(\tilde{\gamma}^1, \ldots, \tilde{\gamma}^d).
	\end{equation*}
	The following coordinate change equations is considered in \cite{MSV}:
	\begin{equation*}
		\begin{aligned}
			&\partial\tilde{\gamma}^i(z)=\sum^{d}_{j=1}:\dfrac{\partial f^i}{\partial \gamma^j}\partial\gamma^j(z):, \\
			&\tilde{b}^i(z)=\sum^{d}_{j=1}:\dfrac{\partial g^j}{\partial \tilde{\gamma}^i}(f(\gamma))(z)b^j(z):,   \\
			&\tilde{c}^i(z)=\sum^{d}_{j=1}:\dfrac{\partial f^i}{\partial \gamma^j}c^j(z):,   \\
			&\tilde{\beta}^i(z)=\sum^d_{j=1}(:\dfrac{\partial g^j}{\partial \tilde{\gamma}^i}(f(\gamma))(z)\beta^j(z):+\sum^d_{k=1}::\dfrac{\partial}{\partial \gamma^k}(\dfrac{\partial g^j}{\partial \tilde{\gamma}^i}(f(\gamma)))(z)c^k(z):b^j(z):)
		\end{aligned}
	\end{equation*}
		and for a general complex manifold $X$, the sections $L(z)$, $G(z)$, $Q_{(0)}$ and $J_{(0)}$ are globally defined.
	
		The complex $\Omega^{ch}_{X}$ is graded as a sheaf of vertex algebras with conformal weights $k$ and fermion numbers $l$:
		\begin{equation*}
			\Omega^{ch}_{X}=\bigoplus_{k\in \mathbb{Z}_{\geq 0}}\bigoplus_{l\in \mathbb{Z}}\Omega^{ch}_{X}[k, l].
		\end{equation*}
	Here $\Omega^{ch}_{X}[k, l](U)=\lbrace A \in \Omega^{ch}_{X}(U)|L_{(1)}A=kA, J_{(0)}A=lA\rbrace$ for any open subset $U$ of $X$.
	
	Using the above construction, we obtain a smooth version of the chiral de Rham complex  $\Omega_X^{ch,sm}$ on $X$, by regarding $X$ as a smooth real manifold, and replacing $\mathcal W(\mathbb C^d)$, the complex coordinate system and the space of analytic functions $\mathcal{O}(U)$ by $\mathcal W(\mathbb C^{2d})$, the smooth coordinate system and the space of smooth complex functions $C^{\infty}(U)$, respectively.

	The sheaf $\Omega_X^{ch, sm}$ includes $\Omega_X^{ch}$ and its complex conjugate. 
The sheaf $\Omega_X^{ch,sm}$ is graded by: $$\Omega^{ch,sm}_X=\bigoplus_{k\geq 0}\bigoplus_{l}\Omega^{ch,sm}_{X}[k,l].$$ Here $\Omega^{ch, sm}_{X}[k,l](U)=\lbrace A \in \Omega^{ch, sm}_{X}(U)|\bar{L}_{(1)}A=kA, \bar{J}_{(0)}A=lA\rbrace$ for any open subset $U$ of $X$. 
Let $\Omega^{ch,l}_X=\Omega^{ch,sm}_X[0,l]$ and $\Omega^{ch,*}_X=\bigoplus_{l=0}^{d}\Omega^{ch,sm}_X[0,l]$. The operator $\bar{\partial}$ is the restriction of $\bar{Q}_{(0)}$ on $\Omega^{ch,*}_X$.

	
	The following results in \cite{S3} will be used in the calculation of the space of global sections of the chiral de Rham complex on closed curves later:
	\begin{theorem}\label{thm:iso0}
		(\romannumeral1) The chiral Hodge cohomology $H^{*}(X,\Omega^{ch}_X)$ is equal to the cohomology of the complex $(\Omega^{ch,*}_X,\bar{\partial})$. 
		
		(\romannumeral2) The chiral Hodge cohomology $H^{*}(X,\Omega^{ch}_X)$ is a vertex algebra. In particular, $\Gamma(X, \Omega^{ch}_{X})=H^{0}(X, \Omega^{ch}_{X})$ is a sub vertex algebra of $H^{*}(X, \Omega^{ch}_{X})$.
	\end{theorem}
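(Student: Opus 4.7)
The plan is to exhibit $(\Omega^{ch,*}_X,\bar\partial)$ as a fine resolution of $\Omega^{ch}_X$, in analogy with the classical Dolbeault resolution of $\cO_X$ by smooth $(0,*)$-forms. Part (i) then reduces to standard sheaf cohomology, and part (ii) follows from the fact that $\bar Q_{(0)}$ is an odd square-zero derivation of all vertex products.

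For (i) I would proceed in four steps. First, verify that $(\Omega^{ch,*}_X,\bar\partial)$ is a complex: a direct OPE computation using that $\bar\beta^i(z)\bar c^j(w)$ is regular shows $\bar Q(z)\bar Q(w)\sim 0$, so $\bar\partial^{2}=0$. Second, identify $\Omega^{ch}_X$ with $\Ker\bigl(\bar\partial\colon\Omega^{ch,0}_X\to\Omega^{ch,1}_X\bigr)$: the holomorphic sheaf embeds into the smooth one in bi-weight $[0,0]$ and is annihilated by $\bar\partial$, while conversely a local smooth section in $[0,0]$ killed by $\bar Q_{(0)}$ contains no $\bar\beta^i$, $\bar b^i$, $\bar c^i$, nor any non-holomorphic dependence on $\bar\gamma^i$, so it lies in $\Omega^{ch}_X$. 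Third, establish a chiral $\bar\partial$-Poincar\'e lemma: on a polydisk the complex is exact in positive fermion degree. Fourth, observe that each $\Omega^{ch,l}_X$ is a module over the subsheaf $C^\infty_X \subset \Omega^{ch,sm}_X[0,0]$ and is therefore fine. Together these yield a fine resolution of $\Omega^{ch}_X$, so its sheaf cohomology is computed by the $\bar\partial$-cohomology of $\Gamma(X,\Omega^{ch,*}_X)$.

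For (ii), the derivation identity $\bar\partial(a_{(n)}b)=(\bar\partial a)_{(n)}b+(-1)^{|a|}a_{(n)}(\bar\partial b)$ follows from $\bar Q$ being an odd field with trivial OPE with itself, so the $n$-th products descend to cohomology and endow $H^{*}(X,\Omega^{ch}_X)$ with a vertex superalgebra structure compatible with the $(k,l)$-bigrading. Since $\Gamma(X,\Omega^{ch}_X)$ is by step two the kernel of $\bar\partial$ on $\Gamma(X,\Omega^{ch,0}_X)$, and there are no coboundaries in fermion degree zero, it coincides with $H^{0}(X,\Omega^{ch}_X)$; being visibly closed under all $n$-th products, it is a sub vertex algebra.

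The main obstacle I anticipate is the local exactness statement, i.e.\ the chiral $\bar\partial$-Poincar\'e lemma. Unlike the classical case, each fermion-degree piece $\Omega^{ch,l}_X$ is infinite-dimensional over $C^\infty_X$, involving all conformal weights and arbitrary monomials in $\bar\beta$, $\bar\alpha$, $\bar b$, $\bar c$. A homotopy must treat these infinitely many antiholomorphic chiral variables uniformly. A natural approach is to filter by conformal weight, exhibit $\bar\partial$ within each stratum as a Koszul-type differential on a polynomial algebra in the antiholomorphic chiral generators, and combine this with the usual $\bar\partial$-homotopy on smooth functions of $\bar\gamma^i$. Verifying that these weight-by-weight homotopies assemble into a well-defined operator on the full sheaf, and that the resulting resolution is compatible with holomorphic coordinate changes so as to glue globally on $X$, is the technically delicate part of the argument.
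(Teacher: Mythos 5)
First, a caveat on the comparison: the paper gives no proof of this theorem at all --- it is quoted verbatim from \cite{S3} ("The following results in \cite{S3} will be used\dots"), so your proposal can only be measured against the argument in that reference. For part (i), your fine-resolution strategy is indeed the one used there, and you correctly isolate the chiral $\bar\partial$-Poincar\'e lemma as the crux; but in \cite{S3} that step is not done by an ad hoc Koszul homotopy on the antiholomorphic chiral variables. Instead one first trivializes those variables against a K\"ahler metric via the isomorphism $I\colon \Omega^{0,*}_X(SW(\bar T^*X))\to\Omega^{ch,*}_X$ (Theorem \ref{thm:iso} of the present paper): under $I$ the differential becomes $\bar D=\bar{\partial}^{\prime}+\sum_{i\ge 1}F_i$, where $\bar{\partial}^{\prime}$ is the ordinary Dolbeault operator of a holomorphic bundle whose fixed-weight graded pieces $SW(\bar T^*X)[k,l]$ have finite rank, and the $F_i$ strictly lower the auxiliary grading $s$, which is bounded on each weight piece. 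Local exactness then follows from the classical Dolbeault lemma by a finite filtration, and fineness is as you say. So your outline of (i) is sound in structure, but the step you flag as delicate is exactly the one you have not carried out, and the clean mechanism for it is the transfer to a genuine graded holomorphic bundle rather than a direct homotopy in the chiral variables.

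Part (ii) as written has a genuine gap. The complex $\Omega^{ch,*}_X=\bigoplus_l\Omega^{ch,sm}_X[0,l]$ is \emph{not} closed under the $n$-th products of $\Omega^{ch,sm}_X$: already for two smooth functions $f,g$ one has $f_{(-2)}g=\,:\!\partial f\,g\!:$, and $\partial f$ contains the terms $\frac{\partial f}{\partial\bar\gamma^i}\,\partial\bar\gamma^i$, which carry antiholomorphic conformal weight $1$ and hence leave $\Omega^{ch,*}_X$. So the argument ``$\bar\partial$ is an odd square-zero derivation of all products, therefore the products descend to cohomology'' does not apply to the cochain complex you are using --- it is not a differential graded vertex algebra. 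The repair is the topological vertex algebra structure in the antiholomorphic sector: since $\bar L=\bar Q_{(0)}\bar G$, the operators $\bar L_{(1)}$ and $\bar L_{(0)}$ are chain homotopic to zero for $\bar Q_{(0)}$, so the inclusion $\Omega^{ch,*}_X(X)\hookrightarrow\Omega^{ch,sm}_X(X)$ is a quasi-isomorphism and the vertex algebra structure on $H^*(\Omega^{ch,sm}_X(X),\bar Q_{(0)})$, which \emph{is} the cohomology of a dg vertex algebra, is transported to $H^*(\Omega^{ch,*}_X,\bar\partial)$; equivalently, the offending positive-weight terms in products of $\bar\partial$-closed elements are $\bar\partial$-exact. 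Your degree-zero assertion survives unchanged, since $\Gamma(X,\Omega^{ch}_X)$ genuinely is a vertex subalgebra of $\Gamma(X,\Omega^{ch,sm}_X)$ and there are no coboundaries in fermion degree zero.
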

	
	\section{Antiholomorphic vector bundles corresponding to chiral de Rham complexes}\label{section3}
	
	Let $X$ be a compact K\"{a}hler manifold with a K\"{a}hler form $h$. $\bar{T}^{*}X$ is the antiholomorphic cotangent bundle of $X$ and $\bar{T}^{*}_{x}X$ is its fiber on $x \in X$.
	
	\subsection{ Vector bundles induced from chiral de Rham complex.}\label{sec:3.1}
	
	In this section we recall the construction of antiholomorphic vector bundles corresponding to chiral de Rham complexes in \cite{S3}. For a holomorphic local coordinate system ($U$, $\gamma^1, \ldots, \gamma^d$), $h$ is locally expressed as $h|_{U}=\sum_{i,j}H_{ij}d\gamma^i\wedge d\bar{\gamma}^j$. Let $(H^{jk})$ be the inverse of $(H_{ij})$, i.e., $\sum_{j}H_{ij}H^{jk}=\delta_{ik}$. Define the generators:
	\begin{equation*}
		\bm{\Gamma}^j=\sum_{i}:H_{ij}\partial\gamma^i:,\quad \bm{c}^j=\sum_{i}:H_{ij}c^i:,\quad \bm{b}^j=\sum_{i}:H^{ji}b^i:,\quad \bm{B}^j=Q_{(0)}\bm{b}^j.
	\end{equation*}
	The only nontrivial OPEs among them are:
	\begin{equation*}
		\bm{B}^i(z)\bm{\Gamma}^j(w)\sim\delta_{ij}(z-w)^{-2},~~\bm{b}^i(z)\bm{c}^j(w)\sim\delta_{ij}(z-w)^{-1}.
	\end{equation*}
	The vertex algebra generated by these elements is denoted by $\mathcal{W}^{\gamma}_{+}$, and it is isomorphic to $\mathcal{W}_{+}(V)$ as a vertex algebra by mapping $\bm{B}^i$, $\bm{\Gamma}^i$, $\bm{b}^i$ and $\bm{c}^i$ to $\beta^i$, $\alpha^i$, $b^i$ and $c^i$, respectively. $\bm{b^{i}}$, $\bm{c}^{i}$ and $\bm{\Gamma}^i$ commute with smooth functions on $U$, and
	\begin{equation*}
		\bm{B}^j(z)f(w)\sim\dfrac{\sum_{i}H^{ji}\frac{\partial f}{\partial \gamma^i}}{z-w}.
	\end{equation*}
	
	Let $\tilde{\gamma}^1, \ldots, \tilde{\gamma}^d$ be the new holomorphic local coordinates on $\tilde{U}$ with the relations $\gamma^i=g_i(\tilde{\gamma})$ and $\tilde{\gamma}^i=f_i(\gamma)$, the K\"{a}hler form is expressed locally as $h|_{\tilde{U}}=\sum_{i,j}\tilde{H}_{ij}d\tilde{\gamma}^i\wedge d\tilde{\gamma}^j$ in the new coordinate system, where $(\tilde{H}^{jk})$ is the inverse of $(\tilde{H}_{ij})$. Let
	\begin{equation*}
		\tilde{\bm{\Gamma}}^j=\sum_{i}:\tilde{H}_{ij}\partial\tilde{\gamma}^i:,\quad \tilde{\bm{c}}^j=\sum_{i}:\tilde{H}_{ij}\tilde{c}^i:,\quad \tilde{\bm{b}}^j=\sum_{i}:\tilde{H}^{ji}\tilde{b}^i:,\quad \tilde{\bm{B}}^j=Q_{(0)}\tilde{\bm{b}}^j.
	\end{equation*}
	Then the following relations hold on $U \cap \tilde{U}$:
	\begin{equation}\label{eqn:transform}
		\tilde{\bm{\Gamma}}^j=\sum_{i}:\overline{\frac{\partial g_{i}}{\partial \tilde{\gamma}^j}}\bm{\Gamma}^i:,\quad \tilde{\bm{c}}^j=\sum_{i}:\overline{\frac{\partial g_{i}}{\partial \tilde{\gamma}^j}}\bm{c}^i:,\quad \tilde{\bm{b}}^j=\sum_{i}:\overline{\frac{\partial f_{j}}{\partial \gamma^i}}\bm{b}^i:,\quad \tilde{\bm{B}}^j=\sum_{i}:\overline{\frac{\partial f_{j}}{\partial \gamma^i}}\bm{B}^i:.
	\end{equation}
	So under the change of coordinates, both $\bm{B}^i$ and $\bm{b}^i$ change like $d\overline{\gamma}^i$, and both $\bm{\Gamma}^i$ and $\bm{c}^i$ change like $\frac{\partial}{\partial \bar{\gamma}^i}$.
	
	Let $SW^{\gamma}$ be the polynomial ring generated by $\bm{b}^i_{(n)}$, $\bm{c}^i_{(n)}$, $\bm{\Gamma}^i_{(n)}$, $\bm{B}^i_{(n)}$, $1\leq i \leq d$, $n<0$, and $S^{\gamma}$ be the set of monomials in $SW^{\gamma}$. Let $SW(\bar{T}^{*}X)$ be an antiholomorphic vector bundle with fibers $SW(\bar{T}_{x}^{*}X)\cong SW^{\gamma}$ on any $x \in U$. 
	
	The set of monomials $S^{\gamma}$ is graded by:
	\begin{equation*}
		\begin{aligned}
			&S^{\gamma}[k, l]=\lbrace A\in S^{\gamma}|[L_{(1)}, A]=kA, [J_{(0)}, A]=lA \rbrace,    \\
			&S^{\gamma}[k, l, s]=\lbrace A\in S^{\gamma}[k,l]|\text{the number of $\bm{B}$'s in $A$ $ - $ the number of $\bm{\Gamma}$'s in $A$ $=$ $s$}\rbrace.
		\end{aligned}
	\end{equation*}
    For any $A\in S^{\gamma}$, 
    \begin{align*}
    	l = \text{the number of $\bf{c}$'s} - \text{the number of $\bf{b}$'s},
    \end{align*}
    since $J_{(0)}\bold{b}^i=-\bold{b}^i$, $J_{(0)}\bold{c}^i=\bold{c}^i$, and $J_{(0)}\bold{B}^i = J_{(0)} \boldsymbol{\Gamma}^i =0$.
	
	For any $A\in S^\gamma[k,l,s]$,  $k\geq$ the number of $\mathbf B'$s in $A$ plus the number of $\mathbf \Gamma'$s in $A$.  So when 
	$k<|s|$, $S^{\gamma}[k,l,s]=\emptyset$.

	Let $SW^{\gamma}[k, l]$, $SW^{\gamma}[k, l, s]$ be the subspaces of $SW^{\gamma}$ spanned by $S^{\gamma}[k,l]$, $S^{\gamma}[k,l,s]$, respectively. Accordingly, the subbundles $SW(\bar{T}^{*}X)[k,l]$ and $SW(\bar{T}^{*}X)[k,l,s]$ of $SW(\bar{T}^{*}X)$ have fibers isomorphic to $SW^{\gamma}[k,l]$, $SW^{\gamma}[k,l,s]$, respectively on any $x\in U$. 
	
	Let $H^0(X, SW(\bar{T}^{*}X))$, $H^0(X, SW(\bar{T}^{*}X))[k,l]$,  $H^0(X, SW(\bar{T}^{*}X))[k,l,s]$ be the space of holomorphic sections of $SW(\bar{T}^{*}X)$, $SW(\bar{T}^{*}X)[k,l]$, $SW(\bar{T}^{*}X)[k,l,s]$, respectively.
	
	\subsection{ Chern connection.}
	
	The K\"{a}hler form $h$ induces a Hermition metric on $SW(\bar{T}^{*}X)$. Let $\nabla=\nabla^{1,0}+\bar{\partial}^{\prime}$ be the Chern connection corresponding to the metric, and it satisfies $\nabla ab=(\nabla a)b+(-1)^{|a|}a\nabla b$ for any $a, b \in \Omega^{*, *}_{X}(X, SW(\bar{T}^{*}X))$.
	
	Let $\theta_{ij}=-\sum_{k}H^{ik}\bar{\partial}^{\prime}H_{kj}$ be a (0, 1)-form, then:
	\begin{equation}\label{eqn:hol}
		\nabla^{1,0}\bm{B}^i_{(k)}=\nabla^{1,0}\bm{\Gamma}^i_{(k)}=\nabla^{1,0}\bm{b}^i_{(k)}=\nabla^{1,0}\bm{c}^i_{(k)}=0
	\end{equation}
	and 
	\begin{equation}\label{eqn:antihol}
		\begin{aligned}
			\bar{\partial}^{\prime}\bm{B}^i_{(k)}&=\sum_{j}\theta_{ij}\bm{B}^j_{(k)}, \bar{\partial}^{\prime}\bm{b}^i_{(k)}=\sum_{j}\theta_{ij}\bm{b}^j_{(k)},  \\
			\bar{\partial}^{\prime}\bm{\Gamma}^i_{(k)}&=-\sum_{j}\theta_{ji}\bm{\Gamma}^j_{(k)},  
			\bar{\partial}^{\prime}\bm{c}^i_{(k)}=-\sum_{j}\theta_{ji}\bm{c}^j_{(k)}  	
		\end{aligned}
	\end{equation}
	for $k<0$. Using these expressions, for any $a \in S^{\gamma}$, the $\nabla a$ can be expressed as $\nabla a=\sum_{a'\in S^{\gamma}}\Theta_{a,a'}a'$, where $\Theta_{a,a'}$'s are one forms, and they are nonzero only if $a'$ has the same number of $\bm{B}$'s, $\bm{\Gamma}$'s, $\bm{b}$'s and $\bm{c}$'s as $a$. 
	
	\subsection{ A canonical isomorphism of sheaves.}
	
		Let $\bar{\mathcal{O}}(SW(\bar{T}^*X))$ be the sheaf of antiholomorphic sections of $SW(\bar{T}^*X)$.
		
		 Let $\Omega^{0,k}_X$ be the sheaf of smooth $(0,k)$-forms with values in $\mathbb{C}$ and let $\Omega^{0,*}_X=\bigoplus \Omega^{0,k}_X$. Let $ \Omega^{0,k}_X(SW(\bar{T}^*X))$ be the sheaf of smooth (0, k)-forms with values in $SW(\bar{T}^*X)$ and let $\Omega^{0,*}_X(SW(\bar{T}^*X))=\bigoplus \Omega^{0,k}_X(SW(\bar{T}^*X))$. 
		
     For any open subset $U$ of $X$, construct a vector space homomorphism $I_{U}$ locally:
    \begin{equation*}
    	I_U: \Omega^{0,*}_X(SW(\bar{T}^*X))(U)\cong SW^{\gamma}\otimes_{\mathbb{C}}\Omega^{0,*}_X(U)\rightarrow \Omega^{ch,*}_{X}(U), a\otimes f \mapsto af,
    \end{equation*}
    then by equation (\ref{eqn:transform}), a homomorphism $I$ of sheaves:
    \begin{equation*}
    	I: \Omega^{0,*}_X(SW(\bar{T}^*X))\rightarrow \Omega_X^{ch,*},
    \end{equation*} 
    is constructed.
			It is proved in \cite{S3}  that:
		\begin{theorem}\label{thm:iso}
			The homomorphism $I$ is an isomorphism of $\bar{\mathcal{O}}(SW(\bar{T}^*X))$ modules.
	\end{theorem}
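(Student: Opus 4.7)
The plan is to reduce to a local question on a coordinate chart $U$, where $I_U$ becomes an explicit change of basis governed by the invertible Hermitian matrix $(H_{ij})$; global bijectivity and module compatibility then follow from the tensorial transformation rule (\ref{eqn:transform}).

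First I would check that the locally defined maps $I_U$ glue to a well-defined sheaf map that respects the $\bar{\mathcal{O}}(SW(\bar T^*X))$-module structures. This reduces to the observation that the antiholomorphic transition coefficients for the bold generators in (\ref{eqn:transform}) are, by definition, the transition functions of the bundle $SW(\bar T^*X)$, so the transformations of $a \otimes f$ on the source and of $af$ on the target agree. Since these coefficients are antiholomorphic, they commute with smooth functions and with the holomorphic chiral generators out of which the bold operators are built, so the $\bar{\mathcal{O}}(SW(\bar T^*X))$-action -- given on both sides by multiplication with antiholomorphic polynomial expressions in the bold modes -- is preserved by $I$.

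Next I would establish that $I_U$ is a bijection on each chart. With the standard conformal weights of $\mathcal{W}(\mathbb{C}^{2d})$, the constraint $\bar L_{(1)} = 0$ kills every antiholomorphic negative mode of positive weight; the survivors are $\bar\gamma^i_{(-1)}$, acting as multiplication by $\bar\gamma^i$, and $\bar c^i_{(-1)}$, corresponding to the one-form $d\bar\gamma^i$. Combined with $\bar J_{(0)} = l$ for $0 \le l \le d$, this identifies $\Omega^{ch,sm}_X[0,l](U)$ as a free $\Omega^{0,l}_X(U)$-module generated by monomials in the holomorphic negative modes of $\beta^i, \alpha^i, b^i, c^i$ acting on the vacuum. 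On the source side, the defining formulas $\bm\Gamma^j = \sum_i :H_{ij}\partial\gamma^i:$, $\bm c^j = \sum_i :H_{ij}c^i:$, $\bm b^j = \sum_i :H^{ji}b^i:$ and $\bm B^j = Q_{(0)}\bm b^j$ express the bold generators as invertible smooth-function-linear combinations of the corresponding holomorphic generators. An induction on conformal weight then shows that the change of basis from the bold PBW basis of $SW^\gamma$ to the holomorphic PBW basis is upper triangular with invertible diagonal blocks (built from tensor powers of $H$ and $H^{-1}$), so $I_U$ is a bijection; tensoring with $\Omega^{0,l}_X(U)$ preserves bijectivity.

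The principal obstacle will be tracking the normal-ordered derivative corrections that appear when higher modes of the bold generators are expanded in terms of modes of $\beta^i, \alpha^i, b^i, c^i$: derivatives of entries of $H$ and $H^{-1}$ enter nontrivially, and these must be shown to contribute only to strictly lower conformal weight. I would control this by combining the conformal-weight filtration with the fermion-number grading; at each weight the leading term is the tensor power of $H$ (or $H^{-1}$), which is invertible by positive-definiteness of the K\"ahler metric, so the induction closes. Combined with the sheaf compatibility established first, this yields the global isomorphism.
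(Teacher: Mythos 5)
The paper does not actually prove Theorem \ref{thm:iso}: it is imported verbatim from \cite{S3} (``It is proved in \cite{S3} that\dots''), so there is no in-paper argument to compare you against. Your two-step outline --- glue via the antiholomorphic transition law (\ref{eqn:transform}), then establish local bijectivity by a filtered change of basis between the bold generators and $\beta^i,\alpha^i,b^i,c^i$ --- is the natural strategy and is essentially the shape of the argument one finds in \cite{S3}. Your identification of $\Omega^{ch,sm}_X[0,l](U)$ as a free $\Omega^{0,l}_X(U)$-module on monomials in the holomorphic negative modes (only $\bar\gamma^i_{(-1)}$ and $\bar c^i_{(-1)}$ survive the constraint $\bar L_{(1)}=0$ on the antiholomorphic side) is correct and is the right starting point.

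Two points need repair. First, the step you treat as automatic --- that $I_U(a\otimes f)=af$ actually lies in $\Omega^{ch,*}_X(U)$, i.e.\ has $\bar L_{(1)}$-eigenvalue $0$ --- is where the real work sits. Already in dimension one, $\bm{c}_{(-2)}\mathbf{1}=(:Hc:)_{(-2)}\mathbf{1}=H_{(-1)}c_{(-2)}\mathbf{1}+(\partial H)_{(-1)}c_{(-1)}\mathbf{1}$, and $\partial H$ contains the term $\frac{\partial H}{\partial\bar\gamma}\,\partial\bar\gamma$, where $\partial\bar\gamma$ has $\bar L$-weight $1$; so the naive product of modes does not visibly stay in the weight-zero subspace, and one must either exhibit the cancellation or make precise the projected action that the cited construction uses. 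Your sketch never confronts this, and it is exactly the place where the ``normal-ordered derivative corrections'' you mention become dangerous rather than merely bookkeeping. Second, your triangularity claim is miscalibrated: the corrections produced by derivatives of $H$ and $H^{-1}$ do \emph{not} land in strictly lower conformal weight. The mode $(\,\cdot\,)_{(n)}$ of a weight-zero field shifts total $L$-weight by $-n-1$ however it is expanded, so $H_{(-1)}c_{(-2)}$ and $(\partial H)_{(-1)}c_{(-1)}$ sit in the same conformal weight; what decreases is the weight carried by the $\beta,b,c$-type factors (the excess migrates onto $\alpha$-factors). The induction therefore has to run on that finer statistic, or on a suitable ordering of the finitely many basis monomials in each fixed weight, not on conformal weight itself. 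With these two repairs the outline closes; as written it has a genuine gap at the first point.
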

	
		Let $\bar{D}=I^*(\bar{\partial})$, i.e.,
		$$\bar{D}a=I_U^{-1}(\bar{\partial}I_U(a)),$$ for any $a \in \Omega^{0,*}_X(SW(\bar{T}^*X))(U)$.
	According to the above Theorem \ref{thm:iso} and Theorem \ref{thm:iso0}, we get the following theorem, 
	\begin{theorem} \label{thm:iso2}
	\begin{equation*}
		H^i(X, \Omega^{ch}_X) \cong H^i(\Omega^{ch,*}_X, \bar{\partial})\cong H^i(\Omega^{0,*}_X(SW(\bar{T}^*X))(X), \bar{D}).
	\end{equation*}
	In particular:
	\begin{equation*}
		\Gamma(X, \Omega^{ch}_X)\cong H^0(\Omega^{ch,*}_X, \bar{\partial})\cong H^0(\Omega^{0,*}_X(SW(\bar{T}^*X))(X), \bar{D}).
	\end{equation*} 
\end{theorem}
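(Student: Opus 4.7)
The plan is to derive Theorem \ref{thm:iso2} as a direct formal consequence of the two preceding results: the Dolbeault-type resolution statement in Theorem \ref{thm:iso0}(i) and the sheaf isomorphism $I$ of Theorem \ref{thm:iso}. There is essentially nothing new to prove, only a careful assembly of the pieces, so the emphasis is on recording how the two cited theorems combine and on checking that the transported differential $\bar{D}$ behaves as advertised.

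First I would apply Theorem \ref{thm:iso0}(i) verbatim to obtain the first isomorphism
$H^i(X, \Omega^{ch}_X) \cong H^i(\Omega^{ch,*}_X, \bar{\partial})$,
interpreting the right-hand side as the cohomology of the global-section complex $(\Omega^{ch,*}_X(X), \bar{\partial})$; this uses that $\Omega^{ch,*}_X$ is a complex of fine sheaves (smooth sections) resolving $\Omega^{ch}_X$ in the sense recalled in Theorem \ref{thm:iso0}. For the second isomorphism I would invoke Theorem \ref{thm:iso}: since $I:\Omega^{0,*}_X(SW(\bar{T}^*X))\to \Omega^{ch,*}_X$ is an isomorphism of sheaves, applying the global section functor yields a vector space isomorphism
$I(X)\colon \Omega^{0,*}_X(SW(\bar{T}^*X))(X)\xrightarrow{\ \cong\ }\Omega^{ch,*}_X(X).$
By the very definition $\bar{D}=I^{-1}\circ\bar{\partial}\circ I$, this map intertwines $\bar{D}$ with $\bar{\partial}$ and is therefore an isomorphism of cochain complexes; passing to cohomology gives $H^i(\Omega^{ch,*}_X, \bar{\partial})\cong H^i(\Omega^{0,*}_X(SW(\bar{T}^*X))(X), \bar{D})$. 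The particular case $i=0$ then extracts the statement for $\Gamma(X,\Omega^{ch}_X)$.

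The only point that deserves a moment of care is that $\bar{D}$, although defined through the chart-dependent local models $I_U$, is actually a globally well-defined operator on $\Omega^{0,*}_X(SW(\bar{T}^*X))$. This however requires no work beyond what is already in Theorem \ref{thm:iso}: because $I$ is a global sheaf isomorphism, the local expressions $I_U^{-1}\bar{\partial}I_U$ automatically agree on overlaps, so $\bar{D}$ is well defined, and squares to zero since $\bar{\partial}^2=0$. Consequently, I do not expect any genuine obstacle; the content of Theorem \ref{thm:iso2} is essentially a corollary of the two previously cited theorems, serving to translate the computation of $\Gamma(X,\Omega^{ch}_X)$ into the more tractable question of computing $H^0$ of an explicit twisted Dolbeault-type complex, which is the form in which the method of \cite{S3} is designed to operate in later sections.
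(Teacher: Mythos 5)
Your proposal is correct and follows exactly the paper's route: the paper states Theorem \ref{thm:iso2} as an immediate consequence of Theorem \ref{thm:iso0}(i) and Theorem \ref{thm:iso}, with $\bar{D}=I^*(\bar{\partial})$ transporting the differential, which is precisely your assembly. The extra remarks you add (well-definedness of $\bar{D}$ on overlaps and $\bar{D}^2=0$) are harmless and consistent with what the paper leaves implicit.
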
		
So we can use 	$H^0(\Omega^{0,*}_X(SW(\bar{T}^*X))(X), \bar{D})$ to calculate the space of global sections of chiral de Rham complex on $X$.

	By \cite{S3}, the operator $\bar{D}$ can be written in the following form:
	$$\bar D=\bar\partial'+\sum_{i=1}^\infty F_i.$$
	Here $F_i$, mapping  $\Omega^{0,*}_X(SW(\bar T^{*}X)[k,l,m])$ to $\Omega^{0,*+1}_X(SW(\bar T^{*}X)[k,l,m-i])$,   is a first order differential operator.
	Locally, for $a\in SW^\gamma$, $f\in \Omega^{0,*}_X(U)$,
	\begin{eqnarray}\label{eqn:Fn} I_U(F_n(a\otimes f))=\sum_{i, j}\sum_{s\in\mathcal Z, |s|=n}\frac{1}{s!} [::\varGamma^s\mathbf c^{i}:\mathbf b^j:_{(0)}, a]f\theta(s+e_i,j)
		\\
		+ \sum_j\sum_{s\in\mathcal Z, |s|=n+1} \frac{1}{s!}[:\varGamma^s\mathbf B^j:_{(0)}-\varGamma^s_{(-1)}\mathbf B^j_{(0)}, a]f\theta(s,j)+\sum_j
	\sum_{s\in\mathcal Z,|s|=n} \frac{1}{s!}[\varGamma^s_{(-1)}, a](\mathbf B_{(0)}^j f)\theta(s,j).\nonumber
	\end{eqnarray}
Here $\mathcal Z =\mathbb Z_{\geq 0}^{d}$ and $e_i=(0,\cdots,0,1,0\cdots,0)\in \mathcal Z$. For $s=(s_1,\cdots, s_d)\in \mathcal Z$, 
$$|s|=\sum_{i=1}^d s_i, \quad s!=\prod_{i=1}^d s_i!.$$
For any symbol `$X$', 
$$X^s=\prod_{i=1}^d (X^i)^{s_i}. $$
	 For $s\in \mathcal Z$,  $\theta(s+e_i,j)=\mathbf B_{(0)}^s\theta_{ij}$.
	 Let
	 $$\varGamma^i(z)=\sum_{j\neq 0} \frac{1}{-j}\mathbf \Gamma^i_{(j)} z^{-j},$$ which is a formal distribution, but is not a field in a vertex algebra for any $i$.
	 
\subsection{ Hermitian locally symmetric space}\label{sec3.4}
A Hermitian locally symmetric space is a connected complex manifold with a Hermitian structure such that its curvature (1,1)-form is flat.
Assume that $X$ is a Hermitian locally symmetric space and $R(\cdot,\cdot): TX\to TX$ is its curvature (1,1)-form. Locally
$$R(\cdot ,\cdot  )\frac{\partial}{\partial {\gamma^j}}=\bar \partial^{\prime}(\partial H_{jk}H^{ki})\frac{\partial}{\partial {\gamma^i}}=-\bar\partial^{\prime} \bar \theta_{ij}\frac{\partial}{\partial {\gamma^i}}=-\overline{\partial \theta_{ij}}\frac{\partial}{\partial {\gamma^i}}.$$
So 
\begin{equation}\label{eqn:curv}
	R(\cdot, \overline{H^{kl}\frac{\partial}{\partial {\gamma^l}}})\frac{\partial}{\partial {\gamma^j}}=
-\overline{\mathbf B^{k}_{(0)}\theta_{ij}}\frac{\partial}{\partial {\gamma^i}}.
\end{equation}
And $$\overline{\partial (\mathbf B^{k}_{(0)}\theta_{ij})}\frac{\partial}{\partial {\gamma^i}}=\bar\partial^{\prime} (\overline{\mathbf B^{k}_{(0)}\theta_{ij}}\frac{\partial}{\partial {\gamma^i}})=-\bar\partial^{\prime} (R(\cdot, \overline{H^{kl}\frac{\partial}{\partial {\gamma^l}}})\frac{\partial}{\partial {\gamma^j}})=-(\nabla_{\bar\partial^{\prime}} R)(\cdot, \overline{H^{kl}\frac{\partial}{\partial {\gamma^l}}})\frac{\partial}{\partial {\gamma^j}}=0.$$
Thus \begin{equation}\label{eqn:theta}
\partial (\mathbf B^{k}_{(0)}\theta_{ij})=0.
\end{equation}
 We have the following lemmas for Hermitian locally symmetric spaces,
\begin{lemma}\label{lem:symmetric}
		If $X$ is a Hermitian locally symmetric space, then, 
	$$F_n=0, \quad n\geq 3.$$
	So 
	\begin{equation*}
		\bar{D}=\bar{\partial}^{\prime}+F_1+F_2.
	\end{equation*}
	Locally, for $a\in SW^\gamma$, $f\in \Omega^{0,*}_X(U)$,
\begin{eqnarray}\label{eqn:F1}
	I_U(F_1(a\otimes f))=-\sum_{i,j,k} [::\varGamma^k\mathbf c^{i}:\mathbf b^j:_{(0)}, a]f	\overline{\langle R(\cdot, \overline{H^{kl}\frac{\partial}{\partial {\gamma^l}}})\frac{\partial}{\partial {\gamma^j}}, d \gamma^i\rangle}
	\\
	-\sum_{i,j,k} \frac{1}{2}[::\varGamma^k\varGamma^i:\mathbf B^j:_{(0)}-:\varGamma^k\varGamma^i:_{(-1)}\mathbf B^j_{(0)}, a]f	\overline{\langle R(\cdot, \overline{H^{kl}\frac{\partial}{\partial {\gamma^l}}})\frac{\partial}{\partial {\gamma^j}}, d \gamma^i\rangle};\nonumber
\\
\label{eqn:F2}
 I_U(F_2(a\otimes f))=-\sum_{i,j,k} \frac{1}{2}[:\varGamma^k\varGamma^i:_{(-1)}, a](\mathbf B_{(0)}^j f)	\overline{\langle R(\cdot, \overline{H^{kl}\frac{\partial}{\partial {\gamma^l}}})\frac{\partial}{\partial {\gamma^j}}, d \gamma^i\rangle}.
\end{eqnarray}
Here $R(\cdot,\cdot): TX\to TX$ is the curvature (1,1)-form, and $\langle \cdot, \cdot\rangle $ is the contraction of tangent field with $1$-form. 
\end{lemma}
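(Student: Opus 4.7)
The plan rests on two vanishing statements. First, directly from the definition $\varGamma^i(z)=\sum_{j\neq 0}(-1/j)\mathbf{\Gamma}^i_{(j)}z^{-j}$, the $z^0$ coefficient is absent, so $\varGamma^i_{(-1)}=0$ automatically. Second, the Hermitian locally symmetric hypothesis (\ref{eqn:theta}) upgrades to the algebraic vanishing $\mathbf{B}^s_{(0)}\theta_{ij}=0$ whenever $|s|\geq 2$: because $\mathbf{B}^k_{(0)}\theta_{ij}$ is a $(0,1)$-form whose $\partial$ is zero, each of its coefficients in the local frame $d\bar\gamma^m$ is annihilated by every holomorphic partial $\partial/\partial\gamma^p$; as each $\mathbf{B}^l_{(0)}$ acts on such coefficients by the holomorphic differential operator $\sum_m H^{lm}\partial/\partial\gamma^m$, any further application kills them, and the claim follows by induction on $|s|$.

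Next I substitute these into the general formula (\ref{eqn:Fn}). Using $\theta(r,j)=\mathbf{B}^{r-e_\bullet}_{(0)}\theta_{\bullet,j}$ (so that $\theta$ is hit by $|r|-1$ applications of $\mathbf{B}_{(0)}$), the three summands of $F_n$ carry $n$, $n$ and $n-1$ applications respectively, since the first has $|s|=n$ with $\theta(s+e_i,j)$, the second has $|s|=n+1$ with $\theta(s,j)$, and the third has $|s|=n$ with $\theta(s,j)$. For $n\geq 3$ all three counts are $\geq 2$, so by the second vanishing above every term of (\ref{eqn:Fn}) is zero and $F_n=0$. For $F_2$ the first two sums still vanish (two derivatives each), while the third survives (one derivative) with prefactor $:\varGamma^k\varGamma^i:_{(-1)}$; repackaging $\sum_{|s|=2}\frac{1}{s!}\varGamma^s=\tfrac12\sum_{k,i}:\varGamma^k\varGamma^i:$ via the Kähler symmetry $\mathbf{B}^k_{(0)}\theta_{ij}=\mathbf{B}^i_{(0)}\theta_{kj}$ (which makes $\theta(r,j)$ well-defined), and converting $\mathbf{B}^k_{(0)}\theta_{ij}$ to the curvature form $-\overline{\langle R(\cdot,\overline{H^{kl}\partial/\partial\gamma^l})\partial/\partial\gamma^j,\,d\gamma^i\rangle}$ via (\ref{eqn:curv}), yields exactly (\ref{eqn:F2}). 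For $F_1$ the first two sums survive (one derivative each), while the third sum vanishes since it carries the prefactor $\varGamma^k_{(-1)}=0$ from the first observation; the same repackaging and curvature substitution reproduces the two displayed terms of (\ref{eqn:F1}).

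The main obstacle will be the combinatorial bookkeeping: verifying that the multinomial weights $1/s!$ and the Wick-product identifications $\varGamma^s=\prod_i(\varGamma^i)^{s_i}$ reassemble the symmetric multi-index sums $\sum_{|s|=n}$ into the tensor sums $\sum_{k,i}$ appearing in (\ref{eqn:F1}) and (\ref{eqn:F2}), and checking that the Kähler symmetry $\mathbf{B}^k_{(0)}\theta_{ij}=\mathbf{B}^i_{(0)}\theta_{kj}$ indeed makes the shorthand $\theta(r,j)$ independent of the auxiliary choice of $e_\bullet\leq r$ used to strip off one $\mathbf{B}_{(0)}$.
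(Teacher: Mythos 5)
Your proposal is correct and follows essentially the same route as the paper's proof: use $\partial(\mathbf B^k_{(0)}\theta_{ij})=0$ from (\ref{eqn:theta}) to kill $\theta(s,j)$ for $|s|\geq 3$ (hence $F_n=0$ for $n\geq 3$ and the first two summands of $F_2$), use $\varGamma^i_{(-1)}=0$ to kill the third summand of $F_1$, and substitute $\mathbf B^k_{(0)}\theta_{ij}=-\overline{\langle R(\cdot,\overline{H^{kl}\partial/\partial\gamma^l})\partial/\partial\gamma^j,d\gamma^i\rangle}$ from (\ref{eqn:curv}). You in fact supply slightly more detail than the paper on the derivative count and the $\tfrac12\sum_{k,i}$ repackaging of $\sum_{|s|=2}\frac{1}{s!}\varGamma^s$.
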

\begin{proof}
If $X$ is a Hermitian locally symmetric space, then  $\theta(s, j)=0$ if $|s|\geq 3$  by equation (\ref{eqn:theta}), and $F_n=0$ by equation (\ref{eqn:Fn}).  From equation (\ref{eqn:Fn}) and the fact that $\varGamma^i_{(-1)}=0$,
\begin{eqnarray*}I_U(F_1(a\otimes f))&=&\sum_{i,j,k} [::\varGamma^k\mathbf c^{i}:\mathbf b^j:_{(0)}, a]f\mathbf B^k_{(0)}\theta_{ij}
	\\
	&&	+ \sum_{i,j,k} \frac{1}{2}[::\varGamma^k\varGamma^i:\mathbf B^j:_{(0)}-:\varGamma^k\varGamma^i:_{(-1)}\mathbf B^j_{(0)}, a]f\mathbf B^k_{(0)}\theta_{ij};
	\\
	I_U(F_2(a\otimes f))&=&\sum_{i,j,k} \frac{1}{2}[:\varGamma^k\varGamma^i:_{(-1)}, a](\mathbf B_{(0)}^j f)\mathbf B^k_{(0)}\theta_{ij}.
\end{eqnarray*}
	By equation (\ref{eqn:curv}),
$$\mathbf B^k_{(0)}\theta_{ij}=-\overline{\langle R(\cdot, \overline{H^{kl}\frac{\partial}{\partial {\gamma^l}}})\frac{\partial}{\partial {\gamma^j}}, d \gamma^i\rangle}.
$$
\end{proof}

So the operators $F_1$ and $F_2$ are determined by the curvature of $X$.

\begin{lemma}\label{lem:DeltaF1}
	If $X$ is a Hermitian locally symmetric space, then 
	\begin{equation*}
[\nabla, F_1]=0.
	\end{equation*}
	
\end{lemma}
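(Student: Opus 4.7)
The approach is to split $\nabla=\nabla^{1,0}+\bar\partial'$ and establish the two identities $[\nabla^{1,0}, F_1]=0$ and $[\bar\partial', F_1]=0$ separately, using the bidegree of the output to decouple them.

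For $[\bar\partial', F_1]=0$, I will use $\bar D^2=0$. By Theorem~\ref{thm:iso}, $\bar D$ is conjugate via $I$ to $\bar\partial=\bar Q_{(0)}$ on the chiral de Rham complex, and $\bar Q_{(0)}^2=0$ since $\bar Q$ is the BRST current of a topological vertex algebra. Writing $\bar D=\bar\partial'+F_1+F_2$ as in Lemma~\ref{lem:symmetric}, note that $F_n$ strictly decreases the index $s$ in the decomposition $SW(\bar T^*X)[k,l,s]$ by $n$, while $\bar\partial'$ preserves $s$. The identity $\bar D^2=0$ therefore splits by $s$-shift: the $s$-shift-$0$ component is $(\bar\partial')^2$, which vanishes because the Chern curvature of any Hermitian bundle is purely of type $(1,1)$; the $s$-shift-$(-1)$ component is then $\{\bar\partial', F_1\}=0$.

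For $[\nabla^{1,0}, F_1]=0$, I will work directly from the local formula (\ref{eqn:F1}). By (\ref{eqn:hol}), $\nabla^{1,0}$ annihilates each generator $\mathbf B^i_{(k)}$, $\boldsymbol\Gamma^i_{(k)}$, $\mathbf b^i_{(k)}$, $\mathbf c^i_{(k)}$; being a derivation, it acts on $SW^\gamma\otimes_{\mathbb C}\Omega^{0,*}_X(U)$ simply as $\mathrm{id}\otimes\partial$. On the other hand, (\ref{eqn:F1}) exhibits $F_1(a\otimes f)$ as a finite sum of terms of the form $A(a)\otimes(f\wedge\omega^k_{ij})$, where $A(a)\in SW^\gamma$ depends only on $a$ and
\begin{equation*}
\omega^k_{ij}:=\mathbf B^k_{(0)}\theta_{ij}=-\overline{\langle R(\cdot,\overline{H^{kl}\tfrac{\partial}{\partial\gamma^l}})\tfrac{\partial}{\partial\gamma^j},d\gamma^i\rangle}
\end{equation*}
is a $(0,1)$-form. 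Equation~(\ref{eqn:theta}) gives $\partial\omega^k_{ij}=0$, so $\partial$ commutes with wedging by each $\omega^k_{ij}$; the Leibniz rule then yields $[\nabla^{1,0}, F_1]=0$.

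The main technical obstacle is extending $F_1$, a priori defined only on $\Omega^{0,*}(SW(\bar T^*X))$, to the full bigraded complex $\Omega^{*,*}(SW(\bar T^*X))$ so that $[\nabla, F_1]$ is well defined and its $(1,0)$- and $(0,1)$-parts can be isolated. The natural extension wedges the extra holomorphic forms through on the left and introduces Koszul signs that must be tracked carefully. Once this bookkeeping is done, the lemma reduces to the two geometric inputs identified above: $\partial\omega^k_{ij}=0$, coming from the parallelism of the curvature, together with $\bar D^2=0$ and the purely $(1,1)$-type of the Chern curvature on $SW(\bar T^*X)$.
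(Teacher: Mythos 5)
Your treatment of the $(1,0)$-part is essentially sound and matches the geometric content of the paper's argument: $\nabla^{1,0}$ kills the generators by (\ref{eqn:hol}), so it acts as $\mathrm{id}\otimes\partial$ in the local frame, and (\ref{eqn:theta}) gives $\partial\omega^k_{ij}=0$. (Note that your claim that $F_1(a\otimes f)=\sum A(a)\otimes f\wedge\omega^k_{ij}$ with $A(a)$ depending only on $a$ relies crucially on the subtraction of the $\mathbf B^j_{(0)}$ term in the second sum of (\ref{eqn:F1}); without it, $\mathbf B^j_{(0)}$ would differentiate $f$. The paper's proof opens by pointing this out.)

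The genuine gap is in the $(0,1)$-part. Deriving $\{\bar\partial',F_1\}=0$ from the $s$-shift decomposition of $\bar D^2=0$ is a correct deduction, but it proves strictly less than the lemma needs. The graded anticommutator only controls the antisymmetrization of $[\nabla_{\partial/\partial\bar\gamma^i},\,\iota_{\partial/\partial\bar\gamma^j}F_1]$ over the antiholomorphic form indices; it says nothing about the individual directional commutators. On a curve --- the case the paper actually uses --- the identity $\{\bar\partial',F_1\}=0$ is vacuous, since both $\bar\partial'F_1$ and $F_1\bar\partial'$ land in $(0,2)$-forms, which vanish identically. Yet what Lemmas \ref{lem:first}, \ref{lem:second} and Theorem \ref{thm:solution} extract from Lemma \ref{lem:DeltaF1} is precisely the directional statement $\nabla_{\partial/\partial\bar\gamma}\,(\iota_{\partial/\partial\bar\gamma}F_1\,a)=\iota_{\partial/\partial\bar\gamma}F_1\,(\nabla_{\partial/\partial\bar\gamma}a)$ (and its $\partial/\partial\gamma$ analogue), which your route does not deliver. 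The paper's proof gets the strong, all-directions statement by a covariance argument: $F_1$ is the contraction of the canonical --- hence parallel --- operator-valued tensors $\sum_k\varGamma^k_{(m)}\otimes H^{kl}\partial/\partial\gamma^l$, $\sum_i\mathbf c^i_{(m)}\otimes d\bar\gamma^i$, $\sum_j\mathbf b^j_{(m)}\otimes\partial/\partial\bar\gamma^j$, $\sum_j\mathbf B^j_{(n)}\otimes\partial/\partial\bar\gamma^j$ ($n\neq0$) with the curvature tensor $R$, and $\nabla R=0$. You need some version of this for the antiholomorphic directions; your frame computation from the $(1,0)$-part does not transfer, because by (\ref{eqn:antihol}) the generators are not $\bar\partial'$-parallel, so $\bar\partial'$ is not $\mathrm{id}\otimes\bar\partial$ in the local frame. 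Replacing the $\bar D^2=0$ step with this parallel-tensor argument (which then handles both bidegrees uniformly and also disposes of your unresolved Koszul-sign bookkeeping, since the statement becomes one about directional derivatives of a bundle map rather than a graded commutator of form-valued operators) would close the gap.
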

\begin{proof}
 In the second summation of the right-hand side of the equation (\ref{eqn:F1}), the term containing $\mathbf B^j_{(0)}$ is subtracted.
Now $\nabla$ commutes with
$$\sum_k\varGamma^k_{(m)}\otimes H^{kl}\frac{\partial}{\partial {\gamma^l}}, \quad \sum_i \mathbf c^i_{(m)}\otimes d\bar \gamma^i,\quad \sum_i \mathbf \varGamma^i_{(m)}\otimes d\bar \gamma^i,\quad  \sum_j \mathbf b^j_{(m)}\otimes \frac{\partial}{\partial \bar \gamma^j}$$
for any $m\in \mathbb Z,$ and commutes with
$$\sum_j \mathbf B^j_{(n)}\otimes \frac{\partial}{\partial \bar \gamma^j}, \quad n\neq 0.$$

From the expression of $F_1$ in equation (\ref{eqn:F1}) and the fact that $\nabla R=0$, we can see that $[\nabla, F_1]=0$.
\end{proof}
For the convenience of deriving some useful formulas about $F_2$, we introduce an operator $\tilde F_2^j$ on $\Omega^{0,*}(SW(\bar T^*X))$ which is locally given by:
$$I_U(\tilde F_2^j(a\otimes f))=-\sum_{i,k} \frac{1}{2}[:\varGamma^k\varGamma^i:_{(-1)}, a]f 	\overline{\langle R(\cdot, \overline{H^{kl}\frac{\partial}{\partial {\gamma^l}}})\frac{\partial}{\partial {\gamma^j}}, d \gamma^i\rangle}.$$
Then by equation (\ref{eqn:F2}),
$$F_2=\sum_{j,l}\tilde F_2^j H^{jl}\nabla_{\frac{\partial}{\partial \gamma^l}}.
$$

\begin{lemma}\label{lem:DeltaF2}
	If $X$ is a Hermitian locally symmetric space, then 
	\begin{equation*}
		[\sum_{m}H^{im}\nabla_{\frac{\partial}{\partial \gamma^m}}, F_2] =0,\quad 
	[\nabla_{\frac{\partial}{\partial\bar \gamma^i}}, F_2] =\tilde F_2^j\tilde R(H^{jl}\frac{\partial}{\partial \gamma^l}, \frac{\partial}{\partial\bar \gamma^i}).
\end{equation*}
Here $\tilde R=[\bar \partial', \nabla^{1,0}]$ is the curvature of $SW(\bar T^*X)$.
\end{lemma}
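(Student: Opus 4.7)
The plan is to exploit the factorization $F_2=\sum_{j,l}\tilde F_2^j H^{jl}\nabla_{\partial/\partial\gamma^l}$ stated just before the lemma, together with a preliminary extension of Lemma~\ref{lem:DeltaF1} to each $\tilde F_2^j$. First I would argue, following the proof of Lemma~\ref{lem:DeltaF1}, that $[\nabla^{1,0}_{\partial/\partial\gamma^m},\tilde F_2^j]=0$: indeed $\tilde F_2^j$ is built from the $\bm{\Gamma}$-generators (annihilated by $\nabla^{1,0}$ via (\ref{eqn:hol})) and from the $(0,1)$-form $\mathbf B^k_{(0)}\theta_{ij}$ (which is $\partial$-closed via (\ref{eqn:theta})). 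The same argument, now using the Hermitian locally symmetric hypothesis $\nabla R=0$ and equation (\ref{eqn:antihol}), shows that $[\bar\partial'_{\partial/\partial\bar\gamma^i},\tilde F_2^j]$ consists purely of a $\bar\theta$-rotation acting on the free $j$-index.

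For the first identity, after pulling each $\tilde F_2^j$ out I would reduce the problem to the bracket
$$\bigl[H^{im}\nabla^{1,0}_m,\,H^{jl}\nabla^{1,0}_l\bigr]=H^{im}H^{jl}[\nabla^{1,0}_m,\nabla^{1,0}_l]+\sum_m\bigl(H^{im}\partial_m H^{jl}-H^{jm}\partial_m H^{il}\bigr)\nabla^{1,0}_l.$$
The second-order term vanishes since the Chern curvature has no $(2,0)$-component. For the first-order term, I would expand $\partial_m H^{jl}=-H^{ja}(\partial_m H_{ab})H^{bl}$ and apply the K\"ahler identity $\partial_m H_{ab}=\partial_a H_{mb}$ to relabel dummy indices; the two terms then coincide, proving the first identity.

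For the second identity the analogous expansion gives
$$\bigl[\bar\partial'_{\partial/\partial\bar\gamma^i},F_2\bigr]=\sum_{j,l}(\partial_{\bar i}H^{jl})\tilde F_2^j\nabla^{1,0}_l+\sum_{j,l}H^{jl}[\bar\partial'_i,\tilde F_2^j]\nabla^{1,0}_l+\sum_{j,l}H^{jl}\tilde F_2^j[\bar\partial'_i,\nabla^{1,0}_l].$$
The last sum is exactly the desired $\tilde F_2^j\tilde R(H^{jl}\partial/\partial\gamma^l,\partial/\partial\bar\gamma^i)$ by the definition $\tilde R=[\bar\partial',\nabla^{1,0}]$. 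The first two sums cancel: differentiating $\sum_lH^{jl}H_{lk}=\delta^j_k$ expresses $\partial_{\bar i}H^{jl}$ in terms of $\partial_{\bar i}H_{lk}$, and this is precisely the $\bar\theta$-rotation of $\tilde F_2^j$ produced by the preliminary step, with opposite sign.

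The hard part will be the preliminary computation of $[\bar\partial',\tilde F_2^j]$ and the verification of its cancellation with $\partial_{\bar i}H^{jl}$ in the second identity. Conceptually this reflects the fact that $\sum_{j}\tilde F_2^j H^{jl}$ is a globally defined object in the $l$-index (since $F_2$ itself is natural), so $\bar\partial'$ applied to $F_2$ can only produce the curvature term. Executing this verification requires careful bookkeeping of the $\bar\theta$-rotations coming from the $\bm{\Gamma}$-generators of $\tilde F_2^j$ against the $\bar\theta$-rotations hidden inside $\partial_{\bar i}H^{jl}$, a computation that uses both the K\"ahler and the symmetric-space conditions in an essential way.
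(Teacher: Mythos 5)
Your proposal is correct and takes essentially the same route as the paper: both rest on the factorization $F_2=\sum_{j,l}\tilde F_2^j H^{jl}\nabla_{\partial/\partial\gamma^l}$, use the vanishing of the $(2,0)$-part of the curvature (the K\"ahler condition) to kill the first bracket, and observe that in the second bracket only the final $\nabla^{1,0}$ factor contributes, yielding $\tilde F_2^j\tilde R(H^{jl}\partial/\partial\gamma^l,\partial/\partial\bar\gamma^i)$. The ``hard part'' you defer --- the cancellation of $\partial_{\bar i}H^{jl}$ against the $\bar\theta$-rotation of the free index of $\tilde F_2^j$ --- is precisely what the paper compresses into the one-line assertion that $\nabla$ commutes with the natural tensor $\tilde F_2^j\otimes H^{jl}\frac{\partial}{\partial\gamma^l}$, which holds by the definition of $\tilde F_2^j$ together with $\nabla R=0$.
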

\begin{proof}

$\nabla$ commutes with 
$$
\sum_k\varGamma^k_{(m)}\otimes H^{kl}\frac{\partial}{\partial {\gamma^l}}, \quad  \sum_i \mathbf \varGamma^i_{(m)}\otimes d\bar \gamma^i,\quad  \sum_j  H^{jl}\frac{\partial}{\partial \gamma^l}\otimes \frac{\partial}{\partial \bar \gamma^j}.
$$ 
By the definition of $\tilde F_2^j$ and the condition $\nabla R=0$, $\nabla$ commutes with $\tilde F_2^j\otimes  H^{jl}\frac{\partial}{\partial \gamma^l}.$ Since 
$$[\sum_{m}H^{im}\nabla_{\frac{\partial}{\partial \gamma^m}},\nabla^{1,0}]=0,$$ 
we have
$$	[\sum_{m}H^{im}\nabla_{\frac{\partial}{\partial \gamma^m}}, F_2]=	[\sum_{m}H^{im}\nabla_{\frac{\partial}{\partial \gamma^m}}, \sum_{j,l}\tilde F_2^j H^{jl}\nabla_{\frac{\partial}{\partial \gamma^l}}]=0.$$
$$	[\nabla_{\frac{\partial}{\partial\bar \gamma^i}}, F_2]=	[\nabla_{\frac{\partial}{\partial\bar \gamma^i}},  \sum_{j,l}\tilde F_2^j H^{jl}\nabla_{\frac{\partial}{\partial \gamma^l}}]=\tilde F_2^j\tilde R(H^{jl}\frac{\partial}{\partial \gamma^l}, \frac{\partial}{\partial\bar \gamma^i}).$$

\end{proof}

When  $X$ is a closed complex curve with genus $g \ge 2$, it can be constructed as a quotient $\mathbb{H}/\Gamma$, here $\mathbb{H}=\left\{z \in \mathbb{C}|\Ima(z)>0\right\}$ and $\Gamma$ is a discrete subgroup of the holomorphic automorphism group $\PSL(2, \mathbb{R})=\left\{z \mapsto \frac{az+b}{cz+d}| a, b, c, d \in \mathbb{R}, ad-bc=1\right\}$. Moreover, $\Gamma$ is required to act properly discontinuously without fixed points on $\mathbb{H}$ to make sure that $\mathbb{H}/\Gamma$ is compact.
We use the upper half plane $\mathbb{H}$ as a covering space $\tilde{X}$ of $X$, and a K\"{a}hler form $h$ on $X$ is locally given by: $h|_{U}=H d\gamma\wedge d\bar{\gamma}=\frac{i}{2 y^2}d\gamma\wedge d\bar{\gamma}$, where $y=\Ima \gamma>0$ and $\gamma$ is a holomorphic local coordinate. The generators of $ \mathcal W_+^\gamma$ are $\mathbf \Gamma,\mathbf c, \mathbf b,\mathbf B$. In this case $X$ is a Hermitian locally symmetric space,
$$\theta=-H^{-1}\bar\partial^{\prime} H=-\frac 2 {\gamma-\bar \gamma} d\bar\gamma, \quad \mathbf B_{(0)}\theta =i d\bar \gamma.$$
By Lemma \ref{lem:symmetric}, we have
\begin{corollary}\label{cor:D}
If $X$ is a complex curve with genus $g\geq 2$,
	$$\bar D=\bar \partial^{\prime}+F_1+F_2.$$
	The differential operators $F_1$ and $F_2$ are locally expressed as:
	\begin{equation}\label{eqn:F1curve}
		I_U(F_1(a\otimes f))=i[::\varGamma\bm{c}:\bm{b}:_{(0)}, a]fd\bar{\gamma}  
		+\frac{i}{2}[::\varGamma^{2}:\bm{B}:_{(0)}-\varGamma^2_{(-1)}\mathbf B_{(0)}, a]f d\bar{\gamma},
	\end{equation} 
	and
	\begin{equation}\label{eqn:F2curve}
		\begin{aligned}
			I_U(F_2(a\otimes f))&=[\varGamma^2_{(-1)}, a](\Ima \gamma)^2(\nabla_{\frac{\partial}{\partial \gamma}} f)d\bar{\gamma},
		\end{aligned}
	\end{equation}
    respectively.
	Here $a\in SW^{\gamma}$ and $f\in \Omega^{0,*}_{X}(U)$.
\end{corollary}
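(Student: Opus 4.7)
The plan is to specialize the general formulas of Lemma \ref{lem:symmetric} to the one-dimensional case, once we verify that Lemma \ref{lem:symmetric} applies. First I would observe that a closed curve $X$ with $g\geq 2$, endowed with the metric coming from the Poincar\'e upper half plane, has constant (holomorphic sectional) curvature, hence its curvature $(1,1)$-form is parallel, so $X$ is a Hermitian locally symmetric space in the sense used above. Therefore Lemma \ref{lem:symmetric} immediately yields $\bar D=\bar\partial'+F_1+F_2$ with the formulas (\ref{eqn:F1}) and (\ref{eqn:F2}).

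The bulk of the work is then the explicit computation of the single connection coefficient $\theta$ and of $\mathbf B_{(0)}\theta$, from which the two expressions in the corollary are read off by substitution. With $d=1$, writing $H=\frac i{2y^2}$ where $y=\Ima\gamma=(\gamma-\bar\gamma)/(2i)$, a direct computation gives
\[
H=\frac{-2i}{(\gamma-\bar\gamma)^2},\qquad H^{-1}=\frac{i(\gamma-\bar\gamma)^2}{2}=-2iy^2.
\]
Differentiating in $\bar\gamma$ and using $\theta=-H^{-1}\bar\partial'H$ produces $\theta=-\tfrac2{\gamma-\bar\gamma}\,d\bar\gamma$, as recorded in the excerpt. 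Since $\mathbf B_{(0)}$ acts on functions by $H^{-1}\partial/\partial\gamma$ and annihilates the form factor $d\bar\gamma$, one then obtains $\mathbf B_{(0)}\theta=i\,d\bar\gamma$.

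Next I would translate these values into the general formulas of Lemma \ref{lem:symmetric}. In dimension one, equation (\ref{eqn:curv}) specializes to
\[
\overline{\bigl\langle R\!\bigl(\cdot,\overline{H^{-1}\tfrac{\partial}{\partial\gamma}}\bigr)\tfrac{\partial}{\partial\gamma},\,d\gamma\bigr\rangle}=-\mathbf B_{(0)}\theta=-i\,d\bar\gamma.
\]
Substituting into (\ref{eqn:F1}) gives the two terms of (\ref{eqn:F1curve}) with the expected factor $i$, while substituting into (\ref{eqn:F2}) gives $\tilde F_2$ multiplied by $-i\,d\bar\gamma$. Using the decomposition $F_2=\tilde F_2\,H^{-1}\nabla_{\partial/\partial\gamma}$ just before Lemma \ref{lem:DeltaF2}, the prefactor becomes $\tfrac{i}{2}\cdot H^{-1}=\tfrac{i}{2}\cdot(-2iy^2)=y^2=(\Ima\gamma)^2$, matching (\ref{eqn:F2curve}).

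No step is a serious obstacle; the only thing that can go wrong is a sign or factor-of-$2$ slip in converting between $H$, $y^2$, and $(\gamma-\bar\gamma)^2$. For safety I would double-check the two intermediate identities $\theta=-\tfrac2{\gamma-\bar\gamma}d\bar\gamma$ and $\mathbf B_{(0)}\theta=i\,d\bar\gamma$ explicitly before plugging into the Lemma \ref{lem:symmetric} formulas, since every constant in the final corollary is a product of these.
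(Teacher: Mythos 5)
Your proposal is correct and follows essentially the same route as the paper, which likewise just computes $\theta=-\tfrac{2}{\gamma-\bar\gamma}\,d\bar\gamma$ and $\mathbf B_{(0)}\theta=i\,d\bar\gamma$ for the Poincar\'e metric and then specializes Lemma \ref{lem:symmetric} to $d=1$ (the paper in fact states the corollary with no further proof beyond this computation). Your constants all check out, including the key step $\tfrac{i}{2}H^{-1}=(\Ima\gamma)^2$ for the prefactor of $F_2$.
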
	 	
\begin{lemma}\label{lem:F2curve}If $X$ is a complex curve with genus $g\geq 2$,
	\begin{equation*}
	iH^{-1}\nabla_{\frac{\partial}{\partial \gamma}}\nabla_{\frac{\partial}{\partial \bar{\gamma}}}F_2 =F_2iH^{-1}\nabla_{\frac{\partial}{\partial \bar{\gamma}}}\nabla_{\frac{\partial}{\partial \gamma}}.
\end{equation*}
\end{lemma}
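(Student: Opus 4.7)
The plan is to reduce $\text{LHS}-\text{RHS}$ to a combination of two curvature terms that cancel, by applying the two commutation relations of Lemma~\ref{lem:DeltaF2} together with the curvature identity $[\nabla_{\partial/\partial\gamma},\nabla_{\partial/\partial\bar\gamma}]=\tilde R(\partial/\partial\gamma,\partial/\partial\bar\gamma)$ of the Chern connection on $SW(\bar T^*X)$. Throughout I work with the decomposition $F_2=\tilde F_2\,H^{-1}\nabla_{\partial/\partial\gamma}$ (the dimension-one case of the formula stated just before Lemma~\ref{lem:DeltaF2}) and abbreviate $R_{1\bar 1}:=\tilde R(\partial/\partial\gamma,\partial/\partial\bar\gamma)$.

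The first step is to rewrite $\nabla_{\partial/\partial\bar\gamma}\nabla_{\partial/\partial\gamma}=\nabla_{\partial/\partial\gamma}\nabla_{\partial/\partial\bar\gamma}-R_{1\bar 1}$ (up to sign), which peels off a term $\mp F_2\cdot iH^{-1}R_{1\bar 1}$ on the right-hand side. In the remaining piece $F_2\cdot iH^{-1}\nabla_{\partial/\partial\gamma}\nabla_{\partial/\partial\bar\gamma}$, the first identity of Lemma~\ref{lem:DeltaF2}, $[H^{-1}\nabla_{\partial/\partial\gamma},F_2]=0$, lets me swap $F_2$ across $H^{-1}\nabla_{\partial/\partial\gamma}$. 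Regrouping with the left-hand side, the whole difference collapses to
$$iH^{-1}\nabla_{\partial/\partial\gamma}[\nabla_{\partial/\partial\bar\gamma},F_2]\pm F_2\cdot iH^{-1}R_{1\bar 1}.$$

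For the commutator $[\nabla_{\partial/\partial\bar\gamma},F_2]$ I plug in the second identity of Lemma~\ref{lem:DeltaF2}, which in dimension one reads $\tilde F_2\cdot H^{-1}R_{1\bar 1}$. Then I use the parallelism recorded in the proof of Lemma~\ref{lem:DeltaF2}: $\nabla$ commutes with $\tilde F_2\otimes H^{-1}\partial/\partial\gamma$. On a K\"ahler curve, $H^{-1}\partial/\partial\gamma$ is itself $\nabla_{\partial/\partial\gamma}$-parallel, because the derivative $\partial_\gamma(H^{-1})\,\partial/\partial\gamma$ cancels exactly against the Christoffel contribution $H^{-1}\cdot H^{-1}\partial_\gamma H\cdot\partial/\partial\gamma$; hence $[\nabla_{\partial/\partial\gamma},\tilde F_2]=0$. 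Pulling $\nabla_{\partial/\partial\gamma}$ through $\tilde F_2$ then yields $iH^{-1}\nabla_{\partial/\partial\gamma}(\tilde F_2\,H^{-1}R_{1\bar 1})=i\tilde F_2\,H^{-1}\nabla_{\partial/\partial\gamma}(H^{-1}R_{1\bar 1})=F_2\cdot iH^{-1}R_{1\bar 1}$, matching the other curvature term up to sign, so the two cancel.

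The main obstacle is bookkeeping the signs that arise from the three separate identities (the two commutators in Lemma~\ref{lem:DeltaF2} and the definition $[\nabla_{\partial/\partial\gamma},\nabla_{\partial/\partial\bar\gamma}]=R_{1\bar 1}$) so that the two residual curvature contributions carry opposite signs. Once the conventions are aligned, the cancellation is forced by the locally symmetric hypothesis $\nabla R=0$ through the parallelism of $\tilde F_2\otimes H^{-1}\partial/\partial\gamma$, which is already built into the construction of $\tilde F_2$.
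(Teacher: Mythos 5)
Your proposal is correct and follows essentially the same route as the paper's proof: both rest on the two commutators of Lemma~\ref{lem:DeltaF2}, the identity $[\nabla_{\frac{\partial}{\partial\gamma}},\tilde F_2]=0$ together with $F_2=\tilde F_2 H^{-1}\nabla_{\frac{\partial}{\partial\gamma}}$, and the curvature relation $\tilde R(\frac{\partial}{\partial\gamma},\frac{\partial}{\partial\bar\gamma})=\nabla_{\frac{\partial}{\partial\bar\gamma}}\nabla_{\frac{\partial}{\partial\gamma}}-\nabla_{\frac{\partial}{\partial\gamma}}\nabla_{\frac{\partial}{\partial\bar\gamma}}$, merely arranged as a cancellation in the difference of the two sides rather than as the paper's direct chain of equalities. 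The signs you leave ambiguous do resolve consistently with the paper's convention for $R(\frac{\partial}{\partial\gamma},\frac{\partial}{\partial\bar\gamma})$, so the two curvature terms cancel as you claim.
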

\begin{proof}
	By Lemma \ref{lem:DeltaF2},
	\begin{eqnarray*}
	H^{-1}\nabla_{\frac{\partial}{\partial \gamma}}\nabla_{\frac{\partial}{\partial \bar{\gamma}}}F_2 &=&H^{-1}\nabla_{\frac{\partial}{\partial \gamma}}(F_2\nabla_{\frac{\partial}{\partial \bar{\gamma}}}+ \tilde F_2 \tilde R(H^{-1}\frac{\partial}{\partial \gamma}, \frac{\partial}{\partial\bar \gamma}))\\
	&=&F_2H^{-1}\nabla_{\frac{\partial}{\partial \gamma}}\nabla_{\frac{\partial}{\partial \bar{\gamma}}}+ \tilde F_2 H^{-1}\nabla_{\frac{\partial}{\partial \gamma}}\tilde R(H^{-1}\frac{\partial}{\partial \gamma}, \frac{\partial}{\partial\bar \gamma})   \\
	&=&F_2H^{-1}\nabla_{\frac{\partial}{\partial \gamma}}\nabla_{\frac{\partial}{\partial \bar{\gamma}}}+  F_2 \tilde R(H^{-1}\frac{\partial}{\partial \gamma}, \frac{\partial}{\partial\bar \gamma})  \\
		&=&F_2H^{-1}\nabla_{\frac{\partial}{\partial \bar{\gamma}}}\nabla_{\frac{\partial}{\partial \gamma}}.
	\end{eqnarray*}
Here $F_2=\tilde F_2 H^{-1}\nabla_{\frac{\partial}{\partial{\gamma}}}$ and $\tilde F_2$ commutes with $\nabla$ as in the proof of Lemma \ref{lem:DeltaF2}.	
\end{proof}

	\section{Global Sections on closed Complex Curves}\label{section4}
In this section we use Theorem \ref{thm:iso2} to calculate the space of global sections of chiral de Rham complex on closed complex curves with genus $g\geq 2$. Let $X$ be a closed complex curves with genus $g\geq 2$. As is mentioned in section \ref{sec3.4}, $X$ is uniformized by $\mathbb{H}$ and we choose the canonical metric induced by the K\"{a}hler form $h$ : $h|_{U}=H d\gamma\wedge d\bar{\gamma}=\frac{i}{2 y^2}d\gamma\wedge d\bar{\gamma}$, where $y=\Ima \gamma>0$ and $\gamma$ is a holomorphic local coordinate. The curvature of $X$ corresponding to this canonical metric is constant negative.

	\subsection{The curvature form on $SW(\bar{T}^{*}X)$.}
	
	For the curvature form $R \in \Omega^{1, 1}(\End(TX))$: $$R(\frac{\partial}{\partial \gamma}, \frac{\partial}{\partial \bar{\gamma}})=\nabla_{\frac{\partial}{\partial \bar{\gamma}}}\nabla_{\frac{\partial}{\partial \gamma}}-\nabla_{\frac{\partial}{\partial \gamma}}\nabla_{\frac{\partial}{\partial{\bar{\gamma}}}}.$$ Using the Hermitian metric on the upper half plane, a direct computation shows:
	\begin{equation*}
		-iH^{-1}R(\frac{\partial}{\partial \gamma}, \frac{\partial}{\partial \bar{\gamma}})\frac{\partial}{\partial \gamma}=-\frac{\partial}{\partial \gamma}.
	\end{equation*}
We have:
    \begin{equation}\label{eqn:Bb}
    -iH^{-1}\tilde{R}(\frac{\partial}{\partial \gamma}, \frac{\partial}{\partial \bar{\gamma}})A=-A,
  \quad \text{if } A=\bm{B}_{(k)}\text{ or } A=\bm{b}_{(k)},
      \end{equation}
      which follows from the fact that the antiholomorphic cotangent bundle $\bar{T}^{*}X$ of $X$ has curvature $-1.$
      
  And:
	\begin{equation}\label{eqn:cc}
		-iH^{-1}\tilde{R}(\frac{\partial}{\partial \gamma}, \frac{\partial}{\partial \bar{\gamma}})A=A,
\quad \text{ if }A=\bm{\Gamma}_{(k)}\text{ or } A=\bm{c}_{(k)},
	\end{equation}
	which follows from the fact that the antiholomorphic tangent bundle $\bar{T}X$ of $X$ has curvature $1.$
	
	 We obtain the following result:
	\begin{lemma}\label{lem:curture}
		The action of the curvature form $\tilde{R}$ on a section $a_{s}$ of $SW(\bar{T}^{*}(X))[k,l,s]$ is:
		\begin{equation}\label{eqn:c}
			-iH^{-1}\tilde{R}(\frac{\partial}{\partial \gamma}, \frac{\partial}{\partial \bar{\gamma}})a_s=(l-s)a_s.
		\end{equation}
		Therefore, the curvature of the sub bundle $SW(\bar{T}^{*}(X))[k,l,s]$ of $SW(\bar{T}^{*}(X))$ is just $l-s$.
		\begin{proof}
			We observe that:
			\begin{align*}
				l=\text{the number of $\bm{c}$'s}-\text{the number of $\bm{b}$'s},
			\end{align*} 
		and 
		\begin{align*}
			s=\text{the number of $\bm{B}$'s}-\text{the number of $\bm{\Gamma}$'s}.
		\end{align*} 
	 So the equation (\ref{eqn:c}) follows from the equations (\ref{eqn:Bb}--\ref{eqn:cc}) above and the fact that  $\tilde{R}_{E\otimes F}=\tilde{R}_{E}\otimes I_{F}+I_{E}\otimes \tilde{R}_{F}$ and $\tilde{R}_{E^{*}}=-\tilde{R}_{E}^*$ for any complex vector bundles $E$, $F$, here $E^{*}$ is the dual bundle of $E$. 
		\end{proof}
		
	\end{lemma}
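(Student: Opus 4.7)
The plan is to reduce the statement to the eigenvalue computation on the generators, which is already given by equations (\ref{eqn:Bb})--(\ref{eqn:cc}), and then to invoke the Leibniz property of the curvature with respect to the tensor product structure on $SW(\bar T^*X)$.

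First I would recall that any monomial $a_s \in S^{\gamma}[k,l,s]$ is a product of modes $\bm{B}^i_{(n)}$, $\bm{b}^i_{(n)}$, $\bm{\Gamma}^i_{(n)}$, $\bm{c}^i_{(n)}$, and that by equation (\ref{eqn:transform}) the bundles they generate transform in $\bar\gamma$-variables only. Thus $SW(\bar T^*X)[k,l,s]$ is built from tensor products (and symmetric/exterior powers) of bundles isomorphic to $\bar T^*X$ (for $\bm{B}, \bm{b}$) and $\bar T X$ (for $\bm{\Gamma}, \bm{c}$). The scalar mode index $n<0$ plays no role in how the Chern connection acts, since the transition functions for each mode are the same as for the generator itself. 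Consequently, the eigenvalues in (\ref{eqn:Bb})--(\ref{eqn:cc}) extend to all modes.

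Next, I would apply the stated tensor product rule $\tilde{R}_{E\otimes F}=\tilde{R}_{E}\otimes I_{F}+I_{E}\otimes \tilde{R}_{F}$ iteratively, so that $-iH^{-1}\tilde{R}(\partial/\partial\gamma,\partial/\partial\bar\gamma)$ acts on a monomial $a_s$ as a derivation: each generator factor contributes its own eigenvalue, and these add. By (\ref{eqn:Bb}) each $\bm{B}$ or $\bm{b}$ factor contributes $-1$, and by (\ref{eqn:cc}) each $\bm{\Gamma}$ or $\bm{c}$ factor contributes $+1$. Summing,
$$-iH^{-1}\tilde{R}\!\left(\tfrac{\partial}{\partial\gamma},\tfrac{\partial}{\partial\bar\gamma}\right)a_s = \bigl(-\#\bm{B}-\#\bm{b}+\#\bm{\Gamma}+\#\bm{c}\bigr)a_s = \bigl((\#\bm{c}-\#\bm{b})-(\#\bm{B}-\#\bm{\Gamma})\bigr)a_s = (l-s)a_s,$$
using the identifications of $l$ and $s$ with the generator counts recalled in Section \ref{section3}.

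The only genuinely nontrivial point is the tensor product and duality bookkeeping: verifying that $\bm{B}, \bm{b}$ live naturally in $\bar T^*X$ while $\bm{\Gamma}, \bm{c}$ live in $\bar T X$, and that the formula $\tilde R_{E^*}=-\tilde R_E^*$ has already been absorbed into the signs of (\ref{eqn:Bb})--(\ref{eqn:cc}). Once that is confirmed, the rest is a direct counting argument, so I expect no real obstacle beyond making the tensor decomposition of $SW(\bar T^*X)[k,l,s]$ explicit enough that the Leibniz rule applies termwise.
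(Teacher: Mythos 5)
Your proposal is correct and follows essentially the same route as the paper's own (very terse) proof: identify $\bm{B},\bm{b}$ as transforming like $d\bar\gamma$ and $\bm{\Gamma},\bm{c}$ like $\partial/\partial\bar\gamma$, use the tensor-product (Leibniz) rule for $\tilde R$ together with equations (\ref{eqn:Bb})--(\ref{eqn:cc}) so each factor contributes $\mp 1$, and sum to get $l-s$ via the generator counts. You merely make explicit the derivation/counting step that the paper leaves implicit.
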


	\subsection{The equation $\bar{D}a=0$.} \label{sec:main}
	 A nonzero section $a \in H^0(\Omega^{0,*}_X(SW(\bar{T}^*X)[k,l])(X), \bar{D})$, is a nonzero smooth section $a$  of $SW(\bar{T}^*X)[k,l]$ with $\bar D a=0$. The section $a$ can be written as  $$a=\sum_{i\geq 0} a_{s-i},$$ where $a_{s-i}$ is a smooth section of $SW(\bar{T}^{*}X)[k, l, s-i]$  and $a_s$ is nonzero. The summation is finite since $SW(\bar{T}^{*}X)[k, l, s-i]=\mathbf 0$ when $k<i-s$.
  By Corollary \ref{cor:D} , $\bar D a=0$ is  divided into a series of equations bellow: 
\begin{eqnarray*}
		SW(\bar{T}^{*}X)[k,l,s]: &&\bar{\partial}^{\prime}a_s=0;\\
	SW(\bar{T}^{*}X)[k,l,s-1]: &&\bar{\partial}^{\prime}a_{s-1}+F_1a_s=0;\\
	SW(\bar{T}^{*}X)[k,l,s-2]: &&\bar{\partial}^{\prime}a_{s-2}+F_1a_{s-1}+F_2a_{s}=0;\\
	&&	\ldots\\
SW(\bar{T}^{*}X)[k,l,s-i]: &&\bar{\partial}^{\prime}a_{s-i}+F_1a_{s-i+1}+F_2a_{s-i+2}=0;\\
&&	\ldots
\end{eqnarray*}
Therefore, $a_s$ must be a nonzero holomorphic section of $SW(\bar{T}^{*}X)[k,l,s]$.
\subsubsection{Case 1: $l-s<0$}	
	The following vanishing theorem for holomorphic sections of Hermitian vector bundles is well known (cf. [ \cite{Ko}, III, 1.9(ii)]),
	\begin{theorem}\label{thm:vanish}
		Let E be a Hermitian vector bundle over a compact Hermition manifold M. Let R be the curvature of E, and $\hat{K}$ be the mean curvature of E. If $\hat{K}$ is negative-definite everywhere on M, then E admits no nonzero holomorphic sections.
	\end{theorem}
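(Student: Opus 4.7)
The plan is to apply the classical Bochner--Kodaira technique. Suppose for contradiction that $s$ is a nonzero holomorphic section of $E$. I would first fix the Chern connection $\nabla = \nabla^{1,0} + \bar\partial$ on $E$ compatible with the given Hermitian metric, for which $\bar\partial s = 0$ since $s$ is holomorphic. A standard computation in a local orthonormal frame, using the compatibility $\nabla\langle\cdot,\cdot\rangle = 0$ and the Chern curvature $R = [\bar\partial, \nabla^{1,0}]$, yields a pointwise Bochner identity of the shape
\[
\sqrt{-1}\,\Lambda\, \partial\bar\partial \|s\|^2 \;=\; \|\nabla^{1,0} s\|^2 \;-\; \langle \hat K(s), s\rangle,
\]
where $\Lambda$ is the adjoint of wedging with $\omega$ and $\hat K = \sqrt{-1}\,\Lambda R$ is the mean curvature.

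Next I would integrate this identity over the closed manifold $M$. Stokes' theorem, combined with the fact that $\sqrt{-1}\,\Lambda\,\partial\bar\partial$ coincides up to torsion with the divergence form of the Chern Laplacian, kills the left-hand side after integration, leaving
\[
\int_M \|\nabla^{1,0} s\|^2\, dV \;=\; \int_M \langle \hat K(s), s\rangle\, dV.
\]
By hypothesis $\hat K$ is negative-definite on $E$, so the right-hand side is $\le 0$ with equality only at points where $s = 0$, while the left-hand side is $\ge 0$. Both integrals must therefore vanish, which forces $s \equiv 0$ and produces the desired contradiction.

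The main obstacle is that $M$ is only assumed to be Hermitian, not K\"ahler, so $d\omega \ne 0$ in general and the integration by parts picks up torsion corrections from $d\omega$. These corrections must be tracked carefully so as not to contaminate the sign of the curvature term; the standard remedy (worked out in Kobayashi, Chapter III) is to replace $\sqrt{-1}\,\Lambda\,\partial\bar\partial$ by the divergence form of the Chern Laplacian, which absorbs the torsion into a term orthogonal to $\langle \hat K(s), s\rangle$. Once this analytic setup is in place, the vanishing conclusion follows immediately from the sign hypothesis on $\hat K$.
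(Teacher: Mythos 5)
The paper does not actually prove this statement; it is quoted as a known result with a pointer to Chapter III of Kobayashi's book, so the comparison here is with the classical argument rather than with anything written in the paper. Your pointwise Bochner identity is the right starting point and is correct: for a holomorphic section $s$ one has $\sqrt{-1}\,\Lambda\,\partial\bar\partial\|s\|^2=\|\nabla^{1,0}s\|^2-\langle\hat K s,s\rangle$. The gap is in the next step. On a general compact Hermitian manifold the integral $\int_M \sqrt{-1}\,\Lambda\,\partial\bar\partial f\,dV_\omega=\frac{1}{(n-1)!}\int_M \sqrt{-1}\,\partial\bar\partial f\wedge\omega^{n-1}$ does \emph{not} vanish: Stokes converts it into $\frac{1}{(n-1)!}\int_M f\,\sqrt{-1}\,\partial\bar\partial(\omega^{n-1})$, which is zero for all $f$ only when $\omega$ is Gauduchon. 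Equivalently, the Chern Laplacian differs from the Laplace--Beltrami operator by a first-order term $\langle df,\theta\rangle$ involving the Lee form, and the integral of that term has no reason to vanish. Your proposed remedy --- that the torsion is ``absorbed into a term orthogonal to $\langle\hat K(s),s\rangle$'' --- is not a meaningful statement as written and does not rescue the sign argument; the torsion corrections in the non-K\"ahler Bochner--Kodaira--Nakano identity are cross terms in $\nabla^{1,0}s$ and $s$ with no definite sign.

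The standard way to close this gap (and the one in Kobayashi, Chapter III) avoids integration entirely: the operator $\sqrt{-1}\,\Lambda\,\partial\bar\partial$ is second-order elliptic with no zeroth-order term, so if $\hat K\le 0$ the identity gives $\sqrt{-1}\,\Lambda\,\partial\bar\partial\|s\|^2\ge 0$ everywhere, and E.~Hopf's strong maximum principle on the compact manifold $M$ forces $\|s\|^2$ to be constant; the identity then yields $\nabla^{1,0}s=0$ and $\langle\hat K s,s\rangle=0$ pointwise, and negative-definiteness of $\hat K$ gives $s\equiv 0$. (Alternatively, one may pass to a Gauduchon representative in the conformal class of $\omega$, which rescales $\hat K$ by a positive function and so preserves negativity, and then your integration goes through.) Note also that in the paper's actual application $X$ is a compact Riemann surface with a K\"ahler metric, so $d\omega=0$ and your integration argument is valid there verbatim; the gap only concerns the theorem in the generality in which it is stated.
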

When $l-s < 0$, by Lemma \ref{lem:curture}, the mean curvature of $SW(\bar{T}^{*}X)[k,l,s]$ is negative-definite everywhere on $X$, so by above Theorem \ref{thm:vanish}, there is no nonzero holomorphic section $a_s$.
	

\subsubsection{ Case 2: $l-s > 0$.}
	
Let $a_s$ be a holomorphic section of $SW(\bar{T}^{*}X)[k, l, s]$ with $l-s>0$. 
Let  $\bar{\partial}^{\prime*}$ be the dual operator of $\bar{\partial}^{\prime}$, then 
$\bar{\partial}^{\prime*}=-iH^{-1}\iota_{\frac{\partial}{\partial \bar{\gamma}}} \nabla_{\frac{\partial}{\partial \gamma}}$. 
	\begin{equation}\label{eqn: compo}
	\begin{aligned}
	\bar{\partial}^{\prime}\bar{\partial}^{\prime*} &=-d\bar{\gamma}\nabla_{\frac{\partial}{\partial\bar{\gamma}}}iH^{-1}\iota_{\frac{\partial}{\partial \bar{\gamma}}} \nabla_{\frac{\partial}{\partial \gamma}}     \\
	&=-id\bar{\gamma}H^{-1}\iota_{\frac{\partial}{\partial \bar{\gamma}}}\nabla_{\frac{\partial}{\partial \bar{\gamma}}}\nabla_{\frac{\partial}{\partial \gamma}}  \\
	&=-id\bar{\gamma}H^{-1}\iota_{\frac{\partial}{\partial \bar{\gamma}}}(\nabla_{\frac{\partial}{\partial \gamma}}\nabla_{\frac{\partial}{\partial \bar{\gamma}}}+\tilde{R}(\frac{\partial}{\partial \gamma}, \frac{\partial}{\partial \bar{\gamma}})).
	\end{aligned}
    \end{equation}

	\begin{lemma}     \label{lem:first}
		Given a nonzero holomorphic section $a_s$ of $SW(\bar{T}^{*}X) [k, l, s]$ with $l-s>0$,  $a_{s-1}=\frac{-1}{l-s}\bar{\partial}^{\prime*}F_1 a_s$ is a solution of the equation $\bar{\partial}^{\prime}a_{s-1}+F_1a_s=0$.
		
		\begin{proof}
			We have
			\begin{align*}
				\bar{\partial}^{\prime}\bar{\partial}^{\prime*}F_1a_s 
				&=-id\bar{\gamma}H^{-1}\iota_{\frac{\partial}{\partial \bar{\gamma}}}(\nabla_{\frac{\partial}{\partial \gamma}}\nabla_{\frac{\partial}{\partial \bar{\gamma}}}+\tilde{R}(\frac{\partial}{\partial \gamma}, \frac{\partial}{\partial \bar{\gamma}}))F_1 a_s   \\
				&=-id\bar{\gamma}H^{-1}\iota_{\frac{\partial}{\partial \bar{\gamma}}}\nabla_{\frac{\partial}{\partial \gamma}}\nabla_{\frac{\partial}{\partial \bar{\gamma}}}F_1a_s+(l-s)F_1a_s  \\
				&= (l-s)F_1a_s.
			\end{align*}
		  The last equality follows from  Lemma \ref{lem:DeltaF1} and the fact that $a_s$ is a holomorphic section. So $a_{s-1}=\frac{-1}{l-s}\bar{\partial}^{\prime*}F_1 a_s$ is a solution of the equation. 
		\end{proof}
		
	\end{lemma}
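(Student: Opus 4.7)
The goal is to verify that the prescription $a_{s-1} = -\frac{1}{l-s}\bar{\partial}^{\prime*}F_1 a_s$ satisfies $\bar{\partial}^{\prime}a_{s-1} + F_1 a_s = 0$. Since $l-s \neq 0$ by assumption, this reduces to establishing the single identity
\[\bar{\partial}^{\prime}\bar{\partial}^{\prime*}(F_1 a_s) = (l-s)\, F_1 a_s.\]
My plan is to substitute the explicit formula (\ref{eqn: compo}) for $\bar{\partial}^{\prime}\bar{\partial}^{\prime*}$ and then handle the resulting ``derivative piece'' and ``curvature piece'' separately.

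For the derivative piece $\nabla_{\partial/\partial\gamma}\nabla_{\partial/\partial\bar{\gamma}}(F_1 a_s)$, the essential input is Lemma \ref{lem:DeltaF1}, which gives $[\nabla, F_1] = 0$. I would commute $\nabla_{\partial/\partial\bar{\gamma}}$ through $F_1$ to rewrite this as $F_1 \nabla_{\partial/\partial\bar{\gamma}} a_s$, and then invoke the hypothesis that $a_s$ is holomorphic to conclude $\nabla_{\partial/\partial\bar{\gamma}} a_s = 0$. This kills the derivative piece outright.

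For the curvature piece, the input is Lemma \ref{lem:curture}. I must track the grading carefully: since $F_1$ decreases $m$ by one and raises the form degree by one, $F_1 a_s$ is a $(0,1)$-form with values in $SW(\bar T^* X)[k, l, s-1]$. By Lemma \ref{lem:curture}, the operator $-iH^{-1}\tilde R(\partial/\partial\gamma, \partial/\partial\bar\gamma)$ acts on the $SW[k,l,s-1]$ factor with eigenvalue $l-(s-1) = l-s+1$, while on the cotangent factor $d\bar\gamma$ it acts with eigenvalue $-1$, since $d\bar\gamma$ transforms like $\mathbf b$ or $\mathbf B$ by (\ref{eqn:Bb}). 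Using the tensor product rule $\tilde R_{E\otimes F} = \tilde R_E\otimes I + I\otimes \tilde R_F$, the total eigenvalue on $F_1 a_s$ is $(l-s+1)+(-1) = l-s$. A short calculation then shows $-id\bar\gamma\, H^{-1}\iota_{\partial/\partial\bar\gamma}\, \tilde R(\partial/\partial\gamma, \partial/\partial\bar\gamma)(F_1 a_s) = (l-s)\, F_1 a_s$, as required.

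The main obstacle I anticipate is the bookkeeping around the $d\bar\gamma$ factor in the curvature piece. A naive direct application of Lemma \ref{lem:curture} to $F_1 a_s$ viewed as a section of $SW[k,l,s-1]$ would yield eigenvalue $l-s+1$, off by one from the desired $l-s$. The correct move is to recognize that the curvature must act simultaneously on both the bundle part and the cotangent factor, with $d\bar\gamma$ supplying the missing $-1$. Once this point is resolved, the rest of the argument is a purely mechanical combination of Lemmas \ref{lem:DeltaF1} and \ref{lem:curture} together with the holomorphicity of $a_s$.
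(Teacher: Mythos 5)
Your proof is correct and follows essentially the same route as the paper: decompose $\bar{\partial}^{\prime}\bar{\partial}^{\prime*}$ via equation (\ref{eqn: compo}), kill the derivative term using Lemma \ref{lem:DeltaF1} together with the holomorphicity of $a_s$, and evaluate the curvature term by Lemma \ref{lem:curture}. Your explicit accounting of the extra $-1$ coming from the $d\bar{\gamma}$ factor (so that the total eigenvalue is $(l-s+1)+(-1)=l-s$ rather than $l-s+1$) is precisely the bookkeeping the paper leaves implicit, and it is consistent with the eigenvalues the paper uses later in Lemma \ref{lem:second} and Theorem \ref{thm:solution}.
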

	
	\begin{lemma}\label{lem:second}
		Suppose $a_{s-1}$ is given as in Lemma \ref{lem:first}, then $a_{s-2}=\frac{-1}{2l-2s+1}(\bar{\partial}^{\prime*}F_1a_{s-1}+\bar{\partial}^{\prime*}F_2a_s)$ is a solution of the equation $\bar{\partial}^{\prime}a_{s-2}+F_1a_{s-1}+F_2a_{s}=0$.
		
		\begin{proof}
			Since $\bar{\partial}^{\prime}a_{s-1}+F_1a_s=0$, we have:
			\begin{align*}
				\nabla_{\frac{\partial}{\partial\bar{\gamma}}}a_{s-1}=-\iota_{\partial\bar{\gamma}}F_1a_s.
			\end{align*}
			We compute:
			\begin{align*}
				\bar{\partial}^{\prime}\bar{\partial}^{\prime*}F_1a_{s-1} &=-d\bar{\gamma} iH^{-1} \iota_{\frac{\partial}{\partial\bar{\gamma}}}(\nabla_{\frac{\partial}{\partial \gamma}}\nabla_{\frac{\partial}{\partial\bar{\gamma}}}+\tilde{R}(\frac{\partial}{\partial \gamma}, \frac{\partial}{\partial \bar{\gamma}}))F_1a_{s-1}   \\
				&=-d\bar{\gamma} iH^{-1} \iota_{\frac{\partial}{\partial\bar{\gamma}}}\nabla_{\frac{\partial}{\partial \gamma}}F_1\nabla_{\frac{\partial}{\partial\bar{\gamma}}}a_{s-1}+(l-s+1)F_1a_{s-1} \\
				&=d\bar{\gamma} iH^{-1} \iota_{\frac{\partial}{\partial\bar{\gamma}}}\nabla_{\frac{\partial}{\partial \gamma}}F_1\iota_{\frac{\partial}{\partial \bar{\gamma}}}F_1a_s+(l-s+1)F_1a_{s-1}.  
			\end{align*}
			
			Using the expression in Lemma \ref{lem:first}, we obtain:
			\begin{align*}
				F_1a_{s-1} &= d\bar{\gamma} \iota_{\frac{\partial}{\partial \bar{\gamma}}} F_1 \frac{-1}{l-s} \bar{\partial}^{\prime*}F_1a_s   \\
				&=\frac{1}{l-s} d \bar{\gamma} \iota_{\frac{\partial}{\partial \bar{\gamma}}} F_1 i H^{-1} \iota_{\frac{\partial}{\partial \bar{\gamma}}} \nabla_{\frac{\partial}{\partial \gamma}} F_1a_s  \\
				&=\frac{1}{l-s} i H^{-1} d\bar{\gamma} \nabla_{\frac{\partial}{\partial \gamma}} \iota_{\frac{\partial}{\partial \bar{\gamma}}} F_1 \iota_{\frac{\partial}{\partial \bar{\gamma}}} F_1 a_s.
			\end{align*}
			
			Therefore, 
			\begin{align*}
				\bar{\partial}^{\prime}\bar{\partial}^{\prime*}F_1a_{s-1}=(2l-2s+1)F_1a_{s-1}.
			\end{align*}
			
			For the term $\bar{\partial}^{\prime}\bar{\partial}^{\prime*}F_2a_s$, using Lemma \ref{lem:F2curve}, we have:
			\begin{align*}
				\bar{\partial}^{\prime}\bar{\partial}^{\prime*}F_2a_s &=-d\bar{\gamma} iH^{-1} \iota_{\frac{\partial}{\partial\bar{\gamma}}}(\nabla_{\frac{\partial}{\partial \gamma}}\nabla_{\frac{\partial}{\partial\bar{\gamma}}}+\tilde{R}(\frac{\partial}{\partial \gamma},\frac{\partial}{\partial \bar{\gamma}}))F_2a_{s}   \\
				&=-d\bar{\gamma} iH^{-1} \iota_{\frac{\partial}{\partial\bar{\gamma}}}\nabla_{\frac{\partial}{\partial \gamma}}\nabla_{\frac{\partial}{\partial\bar{\gamma}}}F_2a_s+(l-s+1)F_2a_s   \\
				&=-F_2iH^{-1}\nabla_{\frac{\partial}{\partial \bar{\gamma}}}\nabla_{\frac{\partial}{\partial \gamma}}a_s+(l-s+1)F_2a_s.
			\end{align*}  
			
			Moreover,
			\begin{align*}
				iH^{-1} \nabla_{\frac{\partial}{\partial \bar{\gamma}}}\nabla_{\frac{\partial}{\partial \gamma}}a_s 
				&= iH^{-1} \tilde{R}(\frac{\partial}{\partial \gamma}, \frac{\partial}{\partial \bar{\gamma}})a_s+	iH^{-1} \nabla_{\frac{\partial}{\partial \gamma}}\nabla_{\frac{\partial}{\partial \bar{\gamma}}}a_s \\
				&= iH^{-1} \tilde{R}(\frac{\partial}{\partial \gamma}, \frac{\partial}{\partial \bar{\gamma}})a_s \\
				&= -(l-s)a_s.
			\end{align*}
			
			So:
			\begin{align*}
				\bar{\partial}^{\prime}\bar{\partial}^{\prime*}F_2a_s&=(l-s)F_2a_s+(l-s+1)F_2a_s  \\
				&=(2l-2s+1)F_2a_s.
			\end{align*}  
		
		Thus $a_{s-2}=\frac{-1}{2l-2s+1}(\bar{\partial}^{\prime*}F_1a_{s-1}+\bar{\partial}^{\prime*}F_2a_s)$ is a solution of the equation.
		\end{proof}
	\end{lemma}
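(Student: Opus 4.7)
\textbf{Proof proposal for Lemma \ref{lem:second}.} The task reduces to verifying
\begin{equation*}
\bar{\partial}^{\prime}\bar{\partial}^{\prime*}\bigl(F_1 a_{s-1}+F_2 a_s\bigr)=(2l-2s+1)\bigl(F_1 a_{s-1}+F_2 a_s\bigr),
\end{equation*}
for then applying $\bar{\partial}^{\prime}$ to the stated $a_{s-2}$ immediately yields $-(F_1 a_{s-1}+F_2 a_s)$, as required. My plan is to expand $\bar{\partial}^{\prime}\bar{\partial}^{\prime*}$ by means of the identity (\ref{eqn: compo}) and then handle the two summands $F_1 a_{s-1}$ and $F_2 a_s$ in parallel, showing that each is an eigenvector of $\bar{\partial}^{\prime}\bar{\partial}^{\prime*}$ with common eigenvalue $2l-2s+1$.

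For $\bar{\partial}^{\prime}\bar{\partial}^{\prime*}F_1 a_{s-1}$, the curvature piece of (\ref{eqn: compo}) contributes $(l-s+1)F_1 a_{s-1}$ by Lemma \ref{lem:curture}, once one combines the eigenvalue from the $SW(\bar T^*X)[k,l,s-2]$-factor with the $-1$ shift coming from the $d\bar\gamma$-factor. For the $\nabla_{\partial/\partial\gamma}\nabla_{\partial/\partial\bar\gamma}$ piece I would first use Lemma \ref{lem:DeltaF1} to pull $\nabla_{\partial/\partial\bar\gamma}$ across $F_1$, then invoke the first-order relation $\bar{\partial}^{\prime}a_{s-1}=-F_1 a_s$ to convert $\nabla_{\partial/\partial\bar\gamma}a_{s-1}$ into $-\iota_{\partial/\partial\bar\gamma}F_1 a_s$; substituting the explicit formula for $a_{s-1}$ from Lemma \ref{lem:first} and matching against the resulting expression $d\bar\gamma\,iH^{-1}\nabla_{\partial/\partial\gamma}\iota_{\partial/\partial\bar\gamma}F_1\iota_{\partial/\partial\bar\gamma}F_1 a_s$ identifies this piece as $(l-s)F_1 a_{s-1}$. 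The two pieces sum to $(2l-2s+1)F_1 a_{s-1}$. The computation for $\bar{\partial}^{\prime}\bar{\partial}^{\prime*}F_2 a_s$ is shorter: Lemma \ref{lem:F2curve} commutes $iH^{-1}\nabla_{\partial/\partial\gamma}\nabla_{\partial/\partial\bar\gamma}$ past $F_2$, reducing the $\nabla\nabla$-piece to $-F_2\,iH^{-1}\nabla_{\partial/\partial\bar\gamma}\nabla_{\partial/\partial\gamma}a_s$. Holomorphy of $a_s$ collapses the latter second derivative to the curvature commutator $iH^{-1}\tilde R(\partial/\partial\gamma,\partial/\partial\bar\gamma)a_s$, which by Lemma \ref{lem:curture} equals $-(l-s)a_s$, so this piece contributes $(l-s)F_2 a_s$. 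Combined with the outer curvature contribution $(l-s+1)F_2 a_s$, this again yields $(2l-2s+1)F_2 a_s$.

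The delicate point, and the part I expect to be the main obstacle, is the bookkeeping of curvature weights on the mixed form/section tensor. The operator $-iH^{-1}\iota_{\partial/\partial\bar\gamma}\tilde R(\partial/\partial\gamma,\partial/\partial\bar\gamma)$ acts on a $(0,1)$-form with values in $SW(\bar T^*X)[k,l,s-2]$, and one has to combine the eigenvalue $l-(s-2)$ from the $SW$-factor with the $-1$ correction from $d\bar\gamma$ to arrive at $l-s+1$; an analogous shift has to be tracked when using Lemma \ref{lem:first} inside the $F_1$-computation. A single sign or shift error here would cause the two eigenvalues to disagree and prevent a common $(2l-2s+1)^{-1}$ from being pulled out. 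Once these weights are verified, the equation $\bar{\partial}^{\prime}a_{s-2}+F_1 a_{s-1}+F_2 a_s=0$ follows by linearity from the definition of $a_{s-2}$.
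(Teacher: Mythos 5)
Your proposal is correct and follows essentially the same route as the paper's proof: the same decomposition of $\bar{\partial}^{\prime}\bar{\partial}^{\prime*}$ via equation (\ref{eqn: compo}), the same use of Lemmas \ref{lem:curture}, \ref{lem:DeltaF1}, \ref{lem:first} and \ref{lem:F2curve}, and the same eigenvalue bookkeeping yielding $(2l-2s+1)$ for both summands. The curvature-weight accounting you flag as the delicate point (combining $l-(s-2)$ from the $SW$-factor with the $-1$ shift from the $d\bar\gamma$-factor to get $l-s+1$) is exactly what the paper does implicitly, and it checks out.
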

	
	In general:
	
	\begin{theorem}\label{thm:solution}
		Let $a_{s+1}=0$,
		then the formula 
		\begin{equation}\label{eqn:induct}
			a_{s-t}=\frac{-1}{t(l-s)+\frac{1}{2}t(t-1)}(\bar{\partial}^{\prime*}F_1a_{s-t+1}+\bar{\partial}^{\prime*}F_2a_{s-t+2})
		\end{equation}
		for $t\geq1$ gives a solution of the equation $\bar{D}(\sum_{t \geq 0} a_{s-t})=0$.

		\begin{proof}
			Induction on $t$. The cases $t=1, 2$ are verified in Lemma \ref{lem:first} and Lemma \ref{lem:second}, respectively. Next assume that $t\geq3$.
			
			 By the inductive hypothesis $\bar{\partial}^{\prime}a_{s-t+1}+F_1a_{s-t+2}+F_2a_{s-t+3}=0$, we have:
			\begin{align*}
				\nabla_{\frac{\partial}{\partial\bar{\gamma}}}a_{s-t+1}+\iota_{\frac{\partial}{\partial \bar{\gamma}}}F_1a_{s-t+2}+\iota_{\frac{\partial}{\partial \bar{\gamma}}}F_2a_{s-t+3}=0.
			\end{align*}
			We compute:
			\begin{align*}
				\bar{\partial}^{\prime}\bar{\partial}^{\prime*}F_1a_{s-t+1}&=-d\bar{\gamma} iH^{-1} \iota_{\frac{\partial}{\partial\bar{\gamma}}}(\nabla_{\frac{\partial}{\partial \gamma}}\nabla_{\frac{\partial}{\partial\bar{\gamma}}}+\tilde{R}(\frac{\partial}{\partial \gamma},\frac{\partial}{\partial\bar{\gamma}}))F_1a_{s-t+1}  \\
				&=-iH^{-1} \nabla_{\frac{\partial}{\partial \gamma}}\nabla_{\frac{\partial}{\partial\bar{\gamma}}}F_1a_{s-t+1}+(l-s+t-1)F_1a_{s-t+1}  \\
				&=-iH^{-1} \nabla_{\frac{\partial}{\partial \gamma}}F_1\nabla_{\frac{\partial}{\partial\bar{\gamma}}}a_{s-t+1}+(l-s+t-1)F_1a_{s-t+1}  \\
				&=iH^{-1} \nabla_{\frac{\partial}{\partial \gamma}}F_1\iota_{\frac{\partial}{\partial \bar{\gamma}}}(F_1a_{s-t+2}+F_2 a_{s-t+3})+(l-s+t-1)F_1a_{s-t+1}.
			\end{align*}
			
			By the inductive formula of $a_{s-t+1}$,
			\begin{align*}
				F_1a_{s-t+1} &=F_1\frac{-1}{(t-1)(l-s)+\frac{1}{2}(t-1)(t-2)}(\bar{\partial}^{\prime*}F_1a_{s-t+2}+\bar{\partial}^{\prime*}F_2a_{s-t+3})   \\
				&=F_1\frac{1}{(t-1)(l-s)+\frac{1}{2}(t-1)(t-2)}iH^{-1}\iota_{\frac{\partial}{\partial \bar{\gamma}}}\nabla_{\frac{\partial}{\partial \gamma}}(F_1a_{s-t+2}+F_2a_{s-t+3})   \\
				&=\frac{1}{(t-1)(l-s)+\frac{1}{2}(t-1)(t-2)}iH^{-1} \nabla_{\frac{\partial}{\partial \gamma}}F_1\iota_{\frac{\partial}{\partial \bar{\gamma}}}(F_1a_{s-t+2}+F_2 a_{s-t+3}).
			\end{align*}
			
			Therefore,
			\begin{align*}
				\bar{\partial}^{\prime}\bar{\partial}^{\prime*}F_1a_{s-t+1} &=((t-1)(l-s)+\frac{1}{2}(t-1)(t-2))F_1a_{s-t+1}+(l-s+t-1)F_1a_{s-t+1}   \\
				&=(t(l-s)+\frac{1}{2}t(t-1))F_1a_{s-t+1}.
			\end{align*}
			
			Next we compute:
			\begin{align*}
				\bar{\partial}^{\prime}\bar{\partial}^{\prime*}F_2a_{s-t+2}  &=-d\bar{\gamma} iH^{-1} \iota_{\frac{\partial}{\partial\bar{\gamma}}}(\nabla_{\frac{\partial}{\partial \gamma}}\nabla_{\frac{\partial}{\partial\bar{\gamma}}}+\tilde{R}(\frac{\partial}{\partial \gamma},\frac{\partial}{\partial\bar{\gamma}}))F_2a_{s-t+2}  \\
				&=-iH^{-1} \nabla_{\frac{\partial}{\partial \gamma}}\nabla_{\frac{\partial}{\partial\bar{\gamma}}}F_2a_{s-t+2}+(l-s+t-1)F_2a_{s-t+2}   \\
				&=-F_2iH^{-1} \nabla_{\frac{\partial}{\partial \bar{\gamma}}}\nabla_{\frac{\partial}{\partial\gamma}}a_{s-t+2}+(l-s+t-1)F_2a_{s-t+2}.
			\end{align*}
			
			Moreover, by the inductive hypothesis $\bar{\partial}^{\prime}a_{s-t+2}+F_1a_{s-t+3}+F_2a_{s-t+4}=0$, we have:
			\begin{align*}
				\nabla_{\frac{\partial}{\partial \bar{\gamma}}}a_{s-t+2}+\iota_{\frac{\partial}{\partial \bar{\gamma}}}F_1a_{s-t+3}+\iota_{\frac{\partial}{\partial \bar{\gamma}}}F_2a_{s-t+4}=0.
			\end{align*}
		
		So:
			\begin{align*}
			&~~~-iH^{-1} \nabla_{\frac{\partial}{\partial \bar{\gamma}}}\nabla_{\frac{\partial}{\partial\gamma}}a_{s-t+2} \\
				&=-iH^{-1}\tilde{R}(\frac{\partial}{\partial \gamma}, \frac{\partial}{\partial\bar{\gamma}})a_{s-t+2}-iH^{-1}\nabla_{\frac{\partial}{\partial \gamma}}\nabla_{\frac{\partial}{\partial \bar{\gamma}}}a_{s-t+2}  \\
				&=(l-s+t-2)a_{s-t+2}+iH^{-1}\nabla_{\frac{\partial}{\partial \gamma}}\iota_{\frac{\partial}{\partial \bar{\gamma}}}F_1a_{s-t+3}+iH^{-1}\nabla_{\frac{\partial}{\partial \gamma}}\iota_{\frac{\partial}{\partial \bar{\gamma}}}F_2a_{s-t+4}.
			\end{align*}
			
			By the inductive formula of $a_{s-t+2}$,
			\begin{align*}
				F_2a_{s-t+2}&=F_2\frac{-1}{(t-2)(l-s)+\frac{1}{2}(t-2)(t-3)}(\bar{\partial}^{\prime*}F_1a_{s-t+3}+\bar{\partial}^{\prime*}F_2a_{s-t+4}) \\
				&=\frac{1}{(t-2)(l-s)+\frac{1}{2}(t-2)(t-3)}F_2iH^{-1}\iota_{\frac{\partial}{\partial \bar{\gamma}}}\nabla_{\frac{\partial}{\partial \gamma}}(F_1a_{s-t+3}+F_2a_{s-t+4}).
			\end{align*}
			
			Therefore,
			\begin{align*}
				&~~~\bar{\partial}^{\prime}\bar{\partial}^{\prime*}F_2a_{s-t+2}  \\
				&=((l-s+t-2)+((t-2)(l-s)+\frac{1}{2}(t-2)(t-3))+(l-s+t-1))F_2a_{s-t+2}  \\
				&=(t(l-s)+\frac{1}{2}t(t-1))F_2a_{s-t+2}.   
			\end{align*}
			
			So under the inductive hypothesis, $a_{s-t}=\frac{-1}{t(l-s)+\frac{1}{2}t(t-1)}(\bar{\partial}^{\prime*}F_1a_{s-t+1}+\bar{\partial}^{\prime*}F_2a_{s-t+2})$ is a solution of the equation $\bar{\partial}^{\prime}a_{s-t}+F_1 a_{s-t+1}+F_2a_{s-t+2}=0$.
		\end{proof}
	\end{theorem}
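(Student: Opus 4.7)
The plan is to prove the theorem by induction on $t$, with the base cases $t=1,2$ already handled in Lemmas \ref{lem:first} and \ref{lem:second}. For the inductive step at $t\geq 3$, assume the explicit formula and the equation $\bar{\partial}^{\prime}a_{s-k}+F_1a_{s-k+1}+F_2a_{s-k+2}=0$ both hold at $k=t-1$ and $k=t-2$. The goal is to verify the same equation at $k=t$ for the proposed $a_{s-t}$. Substituting the formula reduces this to the two eigenvalue identities
$$\bar{\partial}^{\prime}\bar{\partial}^{\prime*}F_1a_{s-t+1}=C_t\,F_1a_{s-t+1},\qquad \bar{\partial}^{\prime}\bar{\partial}^{\prime*}F_2a_{s-t+2}=C_t\,F_2a_{s-t+2},$$
where $C_t=t(l-s)+\tfrac{1}{2}t(t-1)$.

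Both identities are approached by applying the composition formula \eqref{eqn: compo}, which splits $\bar{\partial}^{\prime}\bar{\partial}^{\prime*}$ into a $\nabla_{\partial/\partial\gamma}\nabla_{\partial/\partial\bar\gamma}$ piece and a curvature piece. The curvature piece is immediate from Lemma \ref{lem:curture} (once one accounts for the shift coming from the form factor $d\bar\gamma$) and in both cases contributes $(l-s+t-1)$ times the target.

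For the $F_1$ identity, the $\nabla_{\partial/\partial\gamma}\nabla_{\partial/\partial\bar\gamma}$ piece is handled by commuting $F_1$ outward through $\nabla_{\partial/\partial\bar\gamma}$ (Lemma \ref{lem:DeltaF1}), then substituting $\nabla_{\partial/\partial\bar\gamma}a_{s-t+1}=-\iota_{\partial/\partial\bar\gamma}(F_1a_{s-t+2}+F_2a_{s-t+3})$ from the inductive equation at $k=t-1$. The resulting expression is recognized, via the inductive formula for $a_{s-t+1}$, as exactly $\bigl((t-1)(l-s)+\tfrac{1}{2}(t-1)(t-2)\bigr)F_1a_{s-t+1}$. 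Adding the curvature contribution $(l-s+t-1)$ and simplifying yields $C_t$, as in the $t=2$ model calculation of Lemma \ref{lem:second}.

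For the $F_2$ identity, use Lemma \ref{lem:F2curve} to push $F_2$ past $\nabla_{\partial/\partial\gamma}\nabla_{\partial/\partial\bar\gamma}$ and bring it outside, producing $-F_2\,iH^{-1}\nabla_{\partial/\partial\bar\gamma}\nabla_{\partial/\partial\gamma}a_{s-t+2}$. Reorder to $\nabla_{\partial/\partial\gamma}\nabla_{\partial/\partial\bar\gamma}$ using the Ricci/curvature identity; the curvature on $a_{s-t+2}$ contributes $(l-s+t-2)$ by Lemma \ref{lem:curture}, while the reordered term is processed by replacing $\nabla_{\partial/\partial\bar\gamma}a_{s-t+2}$ via the inductive equation at $k=t-2$ and then invoking the inductive formula for $a_{s-t+2}$, which supplies a factor of $(t-2)(l-s)+\tfrac{1}{2}(t-2)(t-3)$. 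Summing $(l-s+t-1)+(l-s+t-2)+\bigl((t-2)(l-s)+\tfrac{1}{2}(t-2)(t-3)\bigr)$ again collapses to $C_t$, closing the induction.

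The main obstacle is the layered bookkeeping in the $F_2$-case: one must invoke both the inductive equation and the inductive formula at level $s-t+2$, apply the Ricci identity to commute the covariant derivatives past $a_{s-t+2}$ in the correct order, and ensure that all three curvature/eigenvalue contributions combine to the single coefficient $C_t$. Once the two eigenvalue identities above are established, the theorem follows by rearranging $\bar{\partial}^{\prime}a_{s-t}=-C_t^{-1}(\bar{\partial}^{\prime}\bar{\partial}^{\prime*}F_1a_{s-t+1}+\bar{\partial}^{\prime}\bar{\partial}^{\prime*}F_2a_{s-t+2})=-F_1a_{s-t+1}-F_2a_{s-t+2}$.
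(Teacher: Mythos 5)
Your proposal is correct and follows essentially the same route as the paper's proof: the same induction with base cases from Lemmas \ref{lem:first} and \ref{lem:second}, the same reduction to the two eigenvalue identities for $\bar{\partial}^{\prime}\bar{\partial}^{\prime*}F_1a_{s-t+1}$ and $\bar{\partial}^{\prime}\bar{\partial}^{\prime*}F_2a_{s-t+2}$ via the decomposition \eqref{eqn: compo}, Lemmas \ref{lem:DeltaF1}, \ref{lem:F2curve} and \ref{lem:curture}, and the same use of both the inductive equation and the inductive formula at the two preceding levels. The coefficient bookkeeping, including the curvature shift from the $d\bar{\gamma}$ factor and the final collapse to $t(l-s)+\tfrac{1}{2}t(t-1)$, agrees with the paper.
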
 

  So for any $a_s\in H^{0}(X, SW(\bar{T}^{*}X))[k, l, s]$ with $l-s>0$, we can give an $a=\sum_{i\geq 0}{a_{s-i}}$ satisfying the condition for global sections.
	
\subsubsection{ Case 3: $l-s = 0$.}
	
	In this case, we assume that $a_s\in H^{0}(X, SW(\bar{T}^{*}X))[k, s, s]$.
	
	\begin{lemma}\label{lem:G} $SW(\bar{T}^{*}X)[k,s,s]$ is a trivial vector bundle. So
	every holomorphic section $a_s$ of $SW(\bar{T}^{*}X)[k,s,s]$  satisfies $\nabla a=0$ and  the space of holomorphic sections of $SW(\bar{T}^{*}X)[k,s,s]$ is isomorphic to its fiber $SW^{\gamma}[k,s,s]$.
\end{lemma}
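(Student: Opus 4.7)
The plan is to exhibit an explicit global parallel frame of $SW(\bar T^*X)[k,s,s]$ built from the basis monomials of the fibre $SW^\gamma[k,s,s]$, and then to close the argument with compactness of $X$.

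First I would observe that since $\dim_{\mathbb C} X = 1$, every basis monomial $A \in SW^\gamma$ carries two useful counts: $p$, the total number of ``tangent-type'' generators $\mathbf \Gamma_{(-n)}$ and $\mathbf c_{(-n)}$ occurring in $A$, and $q$, the total number of ``cotangent-type'' generators $\mathbf B_{(-n)}$ and $\mathbf b_{(-n)}$ occurring in $A$. The identities $l = (\#\mathbf c) - (\#\mathbf b)$ and $s = (\#\mathbf B) - (\#\mathbf \Gamma)$ recorded just after the grading definitions give $l - s = p - q$, so $SW^\gamma[k,s,s]$ is exactly the span of the monomials with $p = q$.

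For the triviality, I would invoke the transformation rule (\ref{eqn:transform}): under a holomorphic coordinate change $\tilde\gamma = f(\gamma)$, each $\mathbf B$ or $\mathbf b$ is multiplied by the scalar $\overline{f'(\gamma)}$, while each $\mathbf \Gamma$ or $\mathbf c$ is multiplied by $\overline{1/f'(\gamma)}$. Consequently a monomial $A$ of type $(p,q)$ transforms as $\tilde A = \overline{f'(\gamma)^{\,q-p}}\,A$, which equals $A$ precisely when $p = q$. Hence every basis monomial of $SW^\gamma[k,s,s]$ patches to a globally defined smooth section of $SW(\bar T^*X)[k,s,s]$, and these sections together form a global frame, yielding the triviality. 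I would then check that this frame is parallel: by (\ref{eqn:hol}) each generator is $\nabla^{1,0}$-flat, so $\nabla^{1,0} A = 0$ by the Leibniz rule; by (\ref{eqn:antihol}), $\bar\partial'$ multiplies each $\mathbf B,\mathbf b$ by $\theta$ and each $\mathbf \Gamma,\mathbf c$ by $-\theta$, so Leibniz gives $\bar\partial' A = (q-p)\theta\, A = 0$ when $p = q$. Thus $\nabla A = 0$ for every $A$ in the frame.

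To close, any holomorphic section $a_s$ expands uniquely as $a_s = \sum_I \phi_I A_I$ in this parallel frame, and $\bar\partial' a_s = \sum_I (\bar\partial \phi_I) A_I = 0$ forces each coefficient $\phi_I$ to be a holomorphic function on the compact curve $X$, hence a constant. This simultaneously yields $\nabla a_s = 0$ and a linear isomorphism $a_s \mapsto (\phi_I) \in SW^\gamma[k,s,s]$ with the fibre. I do not anticipate a substantive obstacle: the whole lemma rests on the observation that on a curve the condition $l = s$ makes the Jacobian weight $q - p$ of every monomial vanish, which trivializes the bundle and flattens the Chern connection on this piece simultaneously.
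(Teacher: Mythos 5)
Your proof is correct and follows essentially the same route as the paper: the paper observes that $SW(\bar{T}^{*}X)[k,s,s]$ decomposes into tensor products containing equally many copies of $\bar{T}X$ and $\bar{T}^{*}X$, which is exactly your observation that the Jacobian weight $\overline{f'(\gamma)}^{\,q-p}$ of each monomial vanishes when $p=q$. Your write-up is somewhat more complete than the paper's one-line argument, since you explicitly verify via (\ref{eqn:hol}) and (\ref{eqn:antihol}) that the canonical frame is parallel for the Chern connection and then deduce constancy of the coefficients from compactness --- details the paper leaves implicit in the word ``So''.
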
 
	
	\begin{proof}
	By the construction of 	$SW(\bar{T}^{*}X)[k,l,s]$, it is a direct  sum of tensors of $\bar{T}^{*}X$ and $\bar{T}X$ when $X$ is a curve. When $l=s$, the number of $\bar{T}^{*}X$ and $\bar{T}X$ are equal in the tensor. So $SW(\bar{T}^{*}X)[k,s,s]$ is a trivial vector bundle.

	\end{proof}

\begin{corollary}\label{cor:null}
	For any holomorphic section $a_s$ of $SW(\bar{T}^{*}X)[k, s, s]$, we have $F_2\,a_s=0$.
	\begin{proof}
		This follows directly from equation (\ref{eqn:F2curve}) and Lemma \ref{lem:G}.
	\end{proof}
\end{corollary}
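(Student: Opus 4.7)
The plan is to combine Lemma \ref{lem:G} with the local expression (\ref{eqn:F2curve}) for $F_2$ directly. From Lemma \ref{lem:G}, any holomorphic section $a_s$ of $SW(\bar{T}^{*}X)[k,s,s]$ satisfies $\nabla a_s = 0$; in particular the $(1,0)$-part $\nabla^{1,0} a_s$ vanishes. Meanwhile, equation (\ref{eqn:F2curve}) shows that in local coordinates $F_2$ acts only through $\nabla_{\partial/\partial\gamma}$ applied to the ``function factor'' of a section. So I expect that $\nabla^{1,0} a_s = 0$ will force this function factor to be $\gamma$-holomorphic, killing $F_2 a_s$.

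To execute the argument, I will fix a holomorphic coordinate $\gamma$ on $U\subset X$ and use the local isomorphism $SW(\bar{T}^*X)[k,s,s](U)\cong SW^{\gamma}[k,s,s]\otimes_{\mathbb C} C^{\infty}(U)$ (the weight-$(0,0)$ case of the identification used before Theorem \ref{thm:iso}) to write $a_s|_U=\sum_\alpha A_\alpha\otimes f_\alpha$, where $\{A_\alpha\}$ is a $\mathbb C$-basis of the finite-dimensional space $SW^{\gamma}[k,s,s]$ and the $f_\alpha$ are smooth functions on $U$. Equation (\ref{eqn:hol}) says that $\nabla^{1,0}$ annihilates each generator $\bm{B}^{i}_{(k)},\bm{\Gamma}^{i}_{(k)},\bm{b}^{i}_{(k)},\bm{c}^{i}_{(k)}$, and hence by the Leibniz rule it annihilates every monomial $A_\alpha$. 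Therefore $\nabla^{1,0}a_s=\sum_\alpha A_\alpha\otimes\partial f_\alpha$, and $\nabla^{1,0}a_s=0$ combined with the linear independence of $\{A_\alpha\}$ forces $\partial f_\alpha/\partial\gamma=0$ for every $\alpha$. Plugging this into (\ref{eqn:F2curve}) gives $I_U(F_2 a_s)=\sum_\alpha[\varGamma^{2}_{(-1)},A_\alpha](\Ima\gamma)^{2}(\partial f_\alpha/\partial\gamma)\,d\bar{\gamma}=0$, and since this holds on every coordinate patch, $F_2 a_s = 0$ globally.

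I do not anticipate a real obstacle: the argument is essentially a bookkeeping exercise, because (\ref{eqn:hol}) cleanly decouples $\nabla^{1,0}$ from the generators of $SW^{\gamma}$, so the vanishing $\nabla^{1,0}a_s=0$ reduces to a condition on scalar coefficients $f_\alpha$. The only point that needs attention is the $\mathbb{C}$-linear extension of (\ref{eqn:F2curve}) from pure tensors $A\otimes f$ to general finite sums, which is immediate from the construction of $F_2$ as a first-order differential operator on $\Omega^{0,*}_X(SW(\bar{T}^*X))$.
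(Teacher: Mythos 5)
Your proof is correct and is essentially the paper's argument: the paper's one-line proof cites exactly Lemma \ref{lem:G} (which gives $\nabla a_s=0$, hence $\nabla^{1,0}a_s=0$) and equation (\ref{eqn:F2curve}) (which shows $F_2$ acts only through $\nabla_{\partial/\partial\gamma}$ on the coefficient functions), and you have simply spelled out the intermediate bookkeeping via the frame $\{A_\alpha\}$ and equation (\ref{eqn:hol}). No gaps; the expansion is a faithful unpacking of the intended proof.
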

	
	The Hodge decomposition of forms will be used later in the calculation of global sections:
	\begin{theorem}\label{thm:Hodgedecompostion}
		Let $X$ be a compact Hermitian manifold, and $E$ an Hermitian vector bundle over $X$, the operator $\Delta$ is given by $\Delta=\bar{\partial}^{\prime}\bar{\partial}^{\prime*}+\bar{\partial}^{\prime*}\bar{\partial}^{\prime}$,  $\Omega_{X}^{p,q}(E)$ is the space of $(p, q)$-forms over $X$ taking values in $E$, then: 
		\begin{equation}\label{eqn:hodge}
		\Omega_{X}^{p, q}(E)= \Ker\Delta|_{\Omega_{X}^{p, q}(E)} \oplus \bar{\partial}^{\prime} \Omega_{X}^{p, q-1}(E) \oplus \bar{\partial}^{\prime*} \Omega_{X}^{p, q+1}(E).
		\end{equation}
	\end{theorem}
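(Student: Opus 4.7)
This is the classical Dolbeault--Hodge decomposition for the twisted complex $(\Omega^{p,*}_X(E), \bar{\partial}^{\prime})$ on the compact Hermitian manifold $X$ with coefficients in a Hermitian bundle $E$, and I would approach it by the standard route via elliptic operator theory on compact manifolds.

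First I would equip $\Omega^{p,q}_X(E)$ with the global $L^2$ inner product coming from the Hermitian metrics on $X$ and on $E$, so that $\bar{\partial}^{\prime*}$ is literally the formal $L^2$-adjoint of $\bar{\partial}^{\prime}$. Integration by parts then gives
$$\langle \Delta\alpha,\alpha\rangle = \|\bar{\partial}^{\prime}\alpha\|^2 + \|\bar{\partial}^{\prime*}\alpha\|^2,$$
from which $\Delta$ is symmetric and non-negative, and $\Ker\Delta|_{\Omega^{p,q}_X(E)} = \Ker\bar{\partial}^{\prime}\cap\Ker\bar{\partial}^{\prime*}$ on $\Omega^{p,q}_X(E)$.

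Next I would verify that $\Delta$ is a second-order elliptic operator on the Hermitian vector bundle $\Lambda^{p,q}T^*X\otimes E$ by a symbol computation: the principal symbols of $\bar{\partial}^{\prime}$ and $\bar{\partial}^{\prime*}$ are, up to a scalar, exterior and interior multiplication by $\xi^{0,1}$, and their anticommutator is $|\xi|^2\cdot\id$, so $\sigma_2(\Delta)(\xi)$ is a positive multiple of the identity for $\xi\neq 0$. Combined with self-adjointness, the standard theory of elliptic operators on compact manifolds (as developed in Chapter III of \cite{Ko}) then produces a Green's operator $G$ together with the identity $\id = H + \Delta G$, where $H$ is the $L^2$-orthogonal projection onto the finite-dimensional space $\Ker\Delta|_{\Omega^{p,q}_X(E)}$. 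Applied to an arbitrary $\alpha\in\Omega^{p,q}_X(E)$ this yields
$$\alpha = H\alpha + \bar{\partial}^{\prime}(\bar{\partial}^{\prime*} G\alpha) + \bar{\partial}^{\prime*}(\bar{\partial}^{\prime} G\alpha),$$
which is the asserted decomposition; directness of the sum follows from $L^2$-orthogonality of the three summands, a routine verification using $(\bar{\partial}^{\prime})^2 = 0$, the adjoint relation, and the first displayed identity.

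The main obstacle is the hard-analysis input: the finite-dimensionality of $\Ker\Delta|_{\Omega^{p,q}_X(E)}$ and the existence of the Green's operator $G$. These rest on Sobolev-space estimates, Garding's inequality and the compactness of the Rellich--Kondrachov embedding on the compact manifold $X$ --- a body of elliptic regularity theory that is entirely classical and is carried out in detail in the reference \cite{Ko} already cited. Beyond invoking this standard machinery, nothing new is required for the present theorem.
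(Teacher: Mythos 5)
Your outline is correct: it is the standard elliptic-theory proof of the twisted Dolbeault--Hodge decomposition (symbol computation for ellipticity of $\Delta$, Green's operator giving $\id = H + \Delta G$, and $L^2$-orthogonality of the three summands via $(\bar{\partial}^{\prime})^2=0$ and the adjoint relation). The paper itself offers no proof --- it quotes the theorem as a classical fact available in the cited literature --- so there is nothing to contrast; your argument is exactly the one those references carry out.
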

	
	\begin{lemma}\label{lem:solution1}
		There exists a section $a_{s-1}$ of $SW(\bar{T}^{*}X)[k, s , s-1]$ such that $\bar{\partial}^{\prime}a_{s-1}+F_1a_s=0$ if and only if $F_1a_s=0$.
		\begin{proof}
			As in the proof of Lemma \ref{lem:first}, we have:
			\begin{align*}
				\bar{\partial}^{\prime}\bar{\partial}^{\prime*}F_1a_s=(s-s)F_1a_s=0.
			\end{align*}
			On the other hand:
			\begin{align*}
				\bar{\partial}^{\prime*}\bar{\partial}^{\prime}F_1a_s=0.
			\end{align*}
			Therefore,
			\begin{align*}
				\Delta F_1a_s=0.
			\end{align*}
		So $\bar{\partial}^{\prime}a_{s-1}+F_1a_s=0$, if and only if 
			\begin{align*}
				F_1a_s \in \Ker\Delta \cap \im \bar{\partial}^{\prime}.
			\end{align*}
	That is $F_1a_s=0$ 	by Theorem \ref{thm:Hodgedecompostion}.
		\end{proof}
	\end{lemma}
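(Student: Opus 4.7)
The plan is to apply the Hodge decomposition (Theorem \ref{thm:Hodgedecompostion}) to $F_1 a_s$ and show that this element is harmonic; then the only way for it to be $\bar\partial'$-exact is to be zero. The reverse direction is trivial: if $F_1 a_s = 0$ then $a_{s-1}=0$ solves the equation.

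For the forward direction, the equation $\bar\partial' a_{s-1} + F_1 a_s = 0$ places $F_1 a_s$ in $\operatorname{im} \bar\partial'$. So I need to show $F_1 a_s \in \ker \Delta$, i.e., that both $\bar\partial' \bar\partial'^{\,*} F_1 a_s = 0$ and $\bar\partial'^{\,*} \bar\partial' F_1 a_s = 0$.

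First I would compute $\bar\partial' \bar\partial'^{\,*} F_1 a_s$ exactly as in Lemma \ref{lem:first}, using the identity \eqref{eqn: compo} together with the fact that $\nabla_{\frac{\partial}{\partial\bar\gamma}} a_s = 0$ (holomorphicity) and $[\nabla, F_1] = 0$ (Lemma \ref{lem:DeltaF1}). The only surviving term is $(l-s) F_1 a_s$, which in the present case $l=s$ vanishes. Second, since $[\nabla,F_1]=0$ implies $[\bar\partial',F_1]=0$, we get $\bar\partial' F_1 a_s = F_1 \bar\partial' a_s = 0$, and hence $\bar\partial'^{\,*}\bar\partial' F_1 a_s = 0$ as well. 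Combining these gives $\Delta F_1 a_s = 0$.

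Finally, by the Hodge decomposition \eqref{eqn:hodge}, the three summands are mutually orthogonal, so $\ker\Delta \cap \operatorname{im}\bar\partial' = \{0\}$. Since $F_1 a_s$ lies in both subspaces, it must vanish. I do not expect any serious obstacle here; the main subtlety is simply recognizing that when $l=s$ the curvature coefficient killing the harmonic term in Lemma \ref{lem:first} forces $F_1 a_s$ to be harmonic rather than producing an explicit solution, and the conclusion then falls out of the orthogonality in the Hodge decomposition.
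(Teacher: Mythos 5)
Your proposal is correct and follows essentially the same route as the paper: show $F_1a_s$ is harmonic via the curvature computation of Lemma \ref{lem:first} with $l=s$, then use the orthogonality of the Hodge decomposition to conclude that a harmonic $\bar{\partial}^{\prime}$-exact form vanishes. The only cosmetic difference is that you justify $\bar{\partial}^{\prime}F_1a_s=0$ by commuting $F_1$ past $\bar{\partial}^{\prime}$, whereas on a curve this already holds for degree reasons (there are no nonzero $(0,2)$-forms); either way the argument goes through.
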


	\begin{theorem}\label{thm:solution2}
	  For a holomorphic section $a_s$  of $ SW(\bar{T}^{*}X)[k, s, s]$,  there is an $a=\sum_{i\geq 0}a_{s-i}$ with $\bar D a=0$ if and only if $F_1 a_s=0$. If $F_1 a_s=0$, then $\bar D a_s=0$.
	 \end{theorem}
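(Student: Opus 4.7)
The plan is to observe that this theorem is essentially a repackaging of Lemma \ref{lem:solution1} together with Corollary \ref{cor:null}, so very little additional work should be required beyond a careful bookkeeping along the biconditional. I would handle the final sentence simultaneously with the \emph{if} direction, since both amount to showing $a = a_s$ itself is a solution when $F_1 a_s = 0$.

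For the \emph{only if} direction, suppose $a = \sum_{i \geq 0} a_{s-i}$ satisfies $\bar D a = 0$. Decomposing $\bar D a = 0$ according to the grading by the third index (as in the system of equations displayed just before Case 1), the component of weight $s-1$ reads
$$\bar{\partial}^{\prime} a_{s-1} + F_1 a_s = 0,$$
so $a_{s-1}$ is an explicit solution to the equation considered in Lemma \ref{lem:solution1}. By that lemma, a solution can exist only when $F_1 a_s = 0$.

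For the converse, suppose $F_1 a_s = 0$. Since $a_s$ is holomorphic, $\bar{\partial}^{\prime} a_s = 0$; since $a_s$ lies in $SW(\bar T^* X)[k,s,s]$, Corollary \ref{cor:null} gives $F_2 a_s = 0$. Invoking the decomposition $\bar D = \bar{\partial}^{\prime} + F_1 + F_2$ from Corollary \ref{cor:D}, we conclude
$$\bar D a_s = \bar{\partial}^{\prime} a_s + F_1 a_s + F_2 a_s = 0,$$
which proves the last sentence of the theorem. In particular, setting $a_{s-i} = 0$ for $i \geq 1$ produces the required global solution $a = a_s$.

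Thus no genuine new obstacle arises at this stage: the substantive analytic content lies in the Hodge-theoretic identity $\Delta F_1 a_s = 0$ used in Lemma \ref{lem:solution1}, and in the triviality statement of Lemma \ref{lem:G} that underlies Corollary \ref{cor:null}. Once those ingredients are in hand, the theorem is a short formal consequence.
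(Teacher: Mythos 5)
Your proposal is correct and follows essentially the same route as the paper: the \emph{only if} direction via Lemma \ref{lem:solution1} applied to the degree-$(s-1)$ component of $\bar D a=0$, and the \emph{if} direction by combining $\bar{\partial}^{\prime}a_s=0$, $F_1a_s=0$, and Corollary \ref{cor:null} through the decomposition of $\bar D$ in Corollary \ref{cor:D}. The only difference is that you spell out the bookkeeping slightly more explicitly than the paper does.
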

 \begin{proof}
 	By Lemma \ref{lem:solution1}, if there is an $a=\sum_{i\geq 0}a_{s-i}$ with $\bar D a=0$, we must have $F_1a_s=0$. By Corollary \ref{cor:D} and Corollary \ref{cor:null}, if $F_1a_s=0$, then $\bar Da_s=0$.
 \end{proof}
 So  a holomorphic section $a_s$  of $ SW(\bar{T}^{*}X)[k, s, s]$  corresponds to a global section of $\Omega^{ch}_X$ if and only if $F_1a_s=0$ and in this case $a_s$ itself is a global section. 
 
 \subsection{ The $\mathfrak{sl}_2$ action on a $\beta\gamma-bc$ system}\label{sec4.3}
	
	In this subsection, we discuss the equation $F_1a_s=0$ when $a_s$ is a holomorphic section of $ SW(\bar{T}^{*}X)[k, s, s]$.

		Let $V$ be a complex linear space of dimension one. Note that when the complex manifold $X$ is of dimension one, $\mathcal{W}^{\gamma}_{+}$ is isomorphic to $\mathcal{W}_{+}(V)$ as a vertex algebra by mapping $\bm{B}$, $\bm{\Gamma}$, $\bm{b}$, $\bm{c}$ to $\beta$, $\alpha$, $b$, $c$, respectively. Let $$\tilde{\gamma}(z)=\gamma(z)-\gamma_{(-1)}=\sum_{n\neq -1}\gamma_{(n)}z^{-n-1},$$ which is a formal distribution, but is not a field in a vertex algebra. We naturally correspond
	$\bm{B}$, $\varGamma$, $\bm{b}$, $\bm{c}$ to $\beta$, $\tilde{\gamma}$, $b$, $c$, respectively.

	Let $T$ be the Lie algebra generated by vector fields  $\lbrace x^i\frac{\partial}{\partial x}\rbrace_{i=0, 1, 2}$ on $V$.
 $T$ is isomorphic to  $\mathfrak{sl}_2$ by the following relations:
	\begin{equation*}
		[x^i\frac{\partial}{\partial x}, x^j\frac{\partial}{\partial x}]=(j-i)x^{i+j-1}\frac{\partial}{\partial x}.
	\end{equation*} 
	 Let  $\mathcal{L}: T \rightarrow End_{\mathbb{C}}(\mathcal{W}(V))$  be the Lie algebra homomorphism (See Part III in \cite{MS2}) given by 
	\begin{equation*}
		\mathcal{L}(x^i\frac{\partial}{\partial x})=(Q_{(0)}:\gamma^i b:)_{(0)},
	\end{equation*}
	where $Q=:\beta c:$. $\mathcal{L}$ is naturally an action of $\mathfrak{sl}_2$ on the $\beta\gamma-bc$ system $\mathcal{W}(V)$, and the coefficients $(Q_{(0)}:\gamma^i b:)_{(n)}$ of the fields $Q_{(0)}:\gamma^ib:$ generate an affine Kac-Moody algebra $\hat{\mathfrak{sl}}_2$ at level $0$ inside $\mathcal{W}(V)$.

	Define a linear operator $\mathcal{L}^{+}: T \rightarrow End_{\mathbb{C}}(\mathcal{W}(V))$ as zero modes of formal distributions:
	\begin{equation*}
		\mathcal{L}^{+}(x^i\frac{\partial}{\partial x})=(:\tilde{\gamma}^i\beta:)_{(0)}-i(::\tilde{\gamma}^{i-1}b:c:)_{(0)},
	\end{equation*}
	which is obtained by expanding $\mathcal{L}(x^i\frac{\partial}{\partial x})$ as infinite sums of products of modes of $\beta$, $\gamma$, $b$ and $c$, and throwing out all terms where the mode $\gamma_{(-1)}$ appears. 
	Note that $\mathcal{L}^{+}$ is not a homomorphism of Lie algebras. The relation between $\mathcal{L}$ and	$\mathcal{L}^{+}$ is as follows:
	\begin{lemma}\label{lem:relationL}
		For the map $\mathcal{L}$ and the map $\mathcal{L}^{+}$, the following relations hold:
		\begin{eqnarray*}
			\mathcal{L}(\frac{\partial}{\partial x})&=&\mathcal{L}^{+}(\frac{\partial}{\partial x}),
	\\
			\mathcal{L}(x\frac{\partial}{\partial x})&=&\mathcal{L}^{+}(x\frac{\partial}{\partial x})+\gamma_{(-1)}\mathcal{L}^{+}(\frac{\partial}{\partial x}),
	\\
			\mathcal{L}(x^2\frac{\partial}{\partial x})&=&\mathcal{L}^{+}(x^2\frac{\partial}{\partial x})+\gamma_{(-1)}\mathcal{L}^{+}(2x\frac{\partial}{\partial x})+\gamma_{(-1)}^2\mathcal{L}^{+}(\frac{\partial}{\partial x}).
		\end{eqnarray*}
		\begin{proof}
			See Lemma 6 in \cite{S3}.
		\end{proof}
	\end{lemma}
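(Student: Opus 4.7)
The plan is to compute both sides explicitly in the generators of the $\beta\gamma$-$bc$ system. First, since $Q = :\beta c:$, the defining OPEs give the action of the odd derivation $Q_{(0)}$ on generators as $Q_{(0)}\gamma = c$, $Q_{(0)} b = \beta$, $Q_{(0)}\beta = Q_{(0)} c = 0$. Applying the signed Leibniz rule for $Q_{(0)}$ on Wick products, together with $:cb: = -:bc:$, yields
\begin{equation*}
Q_{(0)}(:\gamma^i b:) = :\gamma^i\beta: - i(::\gamma^{i-1}b:c:),
\end{equation*}
so that $\mathcal{L}(x^i\partial/\partial x) = (:\gamma^i\beta:)_{(0)} - i(::\gamma^{i-1}b:c:)_{(0)}$. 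Comparing with the definition of $\mathcal{L}^+$, I see that $\mathcal{L}^+(x^i\partial/\partial x)$ has precisely the same form with $\gamma$ replaced throughout by $\tilde\gamma = \gamma - \gamma_{(-1)}$.

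Next, I would substitute $\gamma = \tilde\gamma + \gamma_{(-1)}$ inside the Wick products and expand binomially. Since $\gamma_{(-1)}$ is a negative (creation) mode that commutes with every $\gamma_{(n)}$ as well as with $b$ and $c$, one obtains the field-level identity
\begin{equation*}
:\gamma^i X: = \sum_{k=0}^i \binom{i}{k}\gamma_{(-1)}^{\,i-k}\,:\tilde\gamma^{\,k} X:,\qquad X\in\{\beta,\,:bc:\},
\end{equation*}
the point being that the Wick subtraction is determined entirely by the singular part of the OPE $\gamma(z)\beta(w) \sim -(z-w)^{-1}$, which is unaffected by the constant summand $\gamma_{(-1)}$. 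Taking the zero mode $(\cdot)_{(0)}$ and regrouping by powers of $\gamma_{(-1)}$ then produces the three identities of the lemma.

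Conceptually, the three identities are all captured by the single coordinate-shift principle $\mathcal{L}(v(x)\partial/\partial x) = \mathcal{L}^+(v(x + \gamma_{(-1)})\partial/\partial x)$ for $v \in \{1,\,x,\,x^2\}$: plugging in each polynomial $v$ and expanding $v(x+\gamma_{(-1)})$ reproduces the three stated equalities. The main obstacle is keeping the normal ordering straight for the nested Wick product $::\gamma^{i-1}b:c:$ in the case $i = 2$, where one must expand
\begin{equation*}
::(\tilde\gamma + \gamma_{(-1)})b:c: = ::\tilde\gamma b:c: + \gamma_{(-1)}(:bc:),
\end{equation*}
verify that $\gamma_{(-1)}$ factors out of both levels of nesting as a scalar, and finally match coefficients of $\gamma_{(-1)}^{\,j}$ in the combined expression for $\mathcal{L}(x^2\partial/\partial x)$; this is a careful but routine verification once the field-level binomial identity above is in hand.
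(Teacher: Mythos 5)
Your proposal is correct in substance. Note that the paper itself gives no argument here --- the ``proof'' is just a citation to Lemma 6 of \cite{S3} --- so your direct computation is a self-contained replacement for that reference, and it follows the route one would expect: use the derivation property of $Q_{(0)}$ together with $Q_{(0)}\gamma=c$, $Q_{(0)}b=\beta$ to get $\mathcal{L}(x^i\frac{\partial}{\partial x})=(:\gamma^i\beta:)_{(0)}-i(::\gamma^{i-1}b:c:)_{(0)}$, then substitute $\gamma=\tilde{\gamma}+\gamma_{(-1)}$ and expand binomially, which is exactly the coordinate-shift principle $\mathcal{L}(v(x)\frac{\partial}{\partial x})=\mathcal{L}^{+}(v(x+\gamma_{(-1)})\frac{\partial}{\partial x})$ you state. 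Two points in your sketch deserve to be made explicit. First, $\gamma_{(-1)}$ does \emph{not} commute with everything in sight: $[\beta_{(0)},\gamma_{(-1)}]=1$. The factorization $(:\gamma^i\beta:)_{(0)}=\sum_{k}\binom{i}{k}\gamma_{(-1)}^{i-k}(:\tilde{\gamma}^{k}\beta:)_{(0)}$ is nevertheless valid because the Wick ordering places the creation mode $\gamma_{(-1)}$ on the left, where it meets only the modes $\beta_{(m)}$ with $m\leq -1$, and those do commute with $\gamma_{(-1)}$; this left placement is also exactly how the right-hand sides of the lemma are written, so the order must be kept. Second, identities such as $:cb:=-:bc:$ and the rearrangement $::\gamma^{i-1}c:b:=-::\gamma^{i-1}b:c:$ hold in general only up to quasi-associativity and quasi-commutativity corrections, which here are total derivatives $\partial(\cdot)$; since $(\partial A)_{(0)}=0$, these drop out once the zero mode is taken, which is where all three asserted operator identities live. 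With those two remarks supplied, your verification goes through and matches what the cited lemma asserts.
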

	
	Let  $\mathcal{W}^{T}(V)$ be the space of invariants in $\mathcal W(V)$ under the action of $\mathfrak{sl}_{2}$:
	
	\begin{definition}
		The subspace $\mathcal{W}^{T}(V)$ of $\mathcal{W}(V)$ is defined by $$\mathcal{W}^{T}(V)=\left\{A\in\mathcal{W}(V)|\mathcal{L}(x^i\frac{\partial}{\partial x})A=0, i=0, 1, 2\right\}.$$
	\end{definition}
	
	\begin{proposition}
	 The subspace $\mathcal{W}^{T}(V)$ is a sub vertex algebra of $\mathcal{W}(V)$.
	\end{proposition}
	\begin{proof}
		For any $A$, $B$ $\in$ $\mathcal{W}(V)$, we have $$\mathcal{L}(x^i\frac{\partial}{\partial x})(A_{(n)}B)=(\mathcal{L}(x^i\frac{\partial}{\partial x})A)_{(n)}B+A_{(n)}(\mathcal{L}(x^i\frac{\partial}{\partial x})B).$$ Therefore, $\left\{A\in\mathcal{W}(V)|\mathcal{L}(x^i\frac{\partial}{\partial x})A=0\right\}$ are sub vertex algebras of $\mathcal{W}(V)$ and $\mathcal{W}^{T}(V)=\cap_{i=0, 1, 2}\left\{A\in\mathcal{W}(V)|\mathcal{L}(x^i\frac{\partial}{\partial x})A=0\right\}$ is a sub vertex algebra of $\mathcal{W}(V)$.
	\end{proof}
	
		Let \begin{equation}\label{M1}
			M_1:=\bigoplus_{k, s}(H^0(X, SW(\bar{T}^{*}X))[k, s, s]\cap \Ker F_1),
			\end{equation}
			where $F_1$ is a first order differential operator.
		
	\begin{proposition}
		The subspace $M_1$ is a sub vertex algebra of $H^0(X, \Omega^{ch}_X)$.
	\end{proposition}
	\begin{proof}
		By Lemma \ref{lem:G}, one only need to check that the subspace $\bigoplus_{k, s}(SW^{\gamma}[k, s, s]\cap \Ker F_1)$ of the fiber $SW^{\gamma}$ is a vertex algebra, which is obvious.
	\end{proof}
	
	\begin{theorem}\label{thm:inv}
		The subspace $M_1$
		 is isomorphic to $\mathcal{W}^{T}(V)$ as vertex algebras.
		\begin{proof}
			By Lemma \ref{lem:G},  the space of holomorphic sections of $\bigoplus_{k,s}SW(\bar{T}^{*}X)[k,s,s]$ is isomorphic to its fibre $\bigoplus_{k,s} SW^{\gamma}[k,s,s]$. That is the subspace of $SW^{\gamma}$ generated by monomials with
			the number of $\mathbf B$'s plus the number of $\mathbf b$'s is equal to the number of $\mathbf \Gamma$'s plus the number of $\mathbf c$'s.
			By Lemma \ref{lem:relationL}, 
			$$\mathcal W^T(V)=\{A\in \mathcal{W}(V)|\mathcal{L}^{+}(x^i\frac{\partial}{\partial x})A=0, i=0, 1, 2\}.$$
Now
$$\mathcal{L}^{+}(\frac{\partial}{\partial x})A=\beta_{(0)}A=0,$$  if and only if $A\in \mathcal{W}_{+}(V)$.
$$\mathcal{L}^{+}(x\frac{\partial}{\partial x})A=:\tilde{\gamma}\beta:_{(0)}A-:bc:_{(0)}A=0,$$ if and only if the number of $\beta$'s plus the number of $b$'s is equal to the number of $\alpha$'s plus the number of $c$'s in each monomial of $A$.
$$\mathcal{L}^{+}(x^2\frac{\partial}{\partial x})A=:\tilde{\gamma}\tilde{\gamma}\beta:_{(0)}A-2:\tilde{\gamma}bc:_{(0)}A=0,$$  is equivalent to $F_1a_s=0$ by the isomorphism between $\mathcal{W}_{+}^{\gamma}$ and $\mathcal{W}_{+}(V)$.

Since the isomorphism above between $M_1$ and $\mathcal{W}^{T}(V)$ is compatible with the vertex isomorphism between $\mathcal{W}_{+}^{\gamma}$ and $\mathcal{W}_{+}(V)$ by mapping $\bm{B}$, $\bm{\Gamma}$, $\bm{b}$, $\bm{c}$ to $\beta$, $\alpha$, $b$, $c$, respectively, we see $M_1$ and $\mathcal{W}^{T}(V)$ are in fact isomorphic as vertex algebras.
		\end{proof}
	\end{theorem}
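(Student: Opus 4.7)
The plan is to identify the three defining conditions of $\mathcal{W}^T(V)$ with the three structural conditions characterizing the elements on the left-hand side, working through the already-established vertex algebra isomorphism $SW^\gamma \cong \mathcal{W}_+(V)$ that sends $\bm{B}, \bm{\Gamma}, \bm{b}, \bm{c}$ to $\beta, \alpha, b, c$.

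First, I would invoke Lemma \ref{lem:G} to replace the space of holomorphic sections of $\bigoplus_{k,s} SW(\bar{T}^{*}X)[k,s,s]$ with the direct sum of its fibers $\bigoplus_{k,s} SW^\gamma[k,s,s]$; under the isomorphism this sits inside $\mathcal{W}_+(V)$ and is characterized by the monomial balance $\#\beta + \#b = \#\alpha + \#c$. Next, using Lemma \ref{lem:relationL}, the differences $\mathcal{L}(x^i \tfrac{\partial}{\partial x}) - \mathcal{L}^+(x^i \tfrac{\partial}{\partial x})$ are sums of terms of the shape $\gamma_{(-1)}^j \mathcal{L}^+(x^{i-j}\tfrac{\partial}{\partial x})$ with $j \geq 1$. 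In particular, once $\mathcal{L}^+(\tfrac{\partial}{\partial x}) A = 0$, the three conditions $\mathcal{L}(x^i \tfrac{\partial}{\partial x}) A = 0$ become equivalent to $\mathcal{L}^+(x^i \tfrac{\partial}{\partial x}) A = 0$, allowing me to rewrite $\mathcal{W}^T(V) = \{A \in \mathcal{W}(V) : \mathcal{L}^+(x^i \tfrac{\partial}{\partial x}) A = 0, \ i = 0, 1, 2\}$.

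I then match the three $\mathcal{L}^+$-conditions with the structural conditions on $a_s$. The $i=0$ condition $\mathcal{L}^+(\tfrac{\partial}{\partial x}) A = \beta_{(0)} A = 0$ is precisely the statement $A \in \mathcal{W}_+(V)$, i.e., that $A$ arises from $SW^\gamma$. The $i=1$ condition $\mathcal{L}^+(x\tfrac{\partial}{\partial x}) A = (:\tilde{\gamma}\beta: - :bc:)_{(0)} A = 0$ measures the imbalance between $(\beta,b)$-weights and $(\alpha,c)$-weights on each monomial, and so is exactly the grading condition $l = s$ that selects the summand $SW^\gamma[k,s,s]$. The main obstacle is the $i = 2$ condition: I must show that $\mathcal{L}^+(x^2 \tfrac{\partial}{\partial x}) A = (:\tilde{\gamma}^2 \beta:_{(0)} - 2 :\tilde{\gamma} bc:_{(0)}) A = 0$ is equivalent to $F_1 a_s = 0$. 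For this, I would substitute $f = 1$ into the local formula (\ref{eqn:F1curve}) of Corollary \ref{cor:D} and translate every $\varGamma, \bm{B}, \bm{b}, \bm{c}$ via the isomorphism to $\tilde{\gamma}, \beta, b, c$. The subtracted term $\varGamma^2_{(-1)}\bm{B}_{(0)}$ in (\ref{eqn:F1curve}) vanishes on $a_s$ because $\beta_{(0)}$ annihilates every element of $\mathcal{W}_+(V)$ by the already-established $i = 0$ condition, and the two remaining summands $::\varGamma\bm{c}:\bm{b}:_{(0)} a_s$ and $::\varGamma^2:\bm{B}:_{(0)} a_s$ should align, up to a common nonzero scalar, with the two summands of $\mathcal{L}^+(x^2 \tfrac{\partial}{\partial x}) A$. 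Verifying this alignment with correct signs and normal-ordering conventions is the delicate point I expect to be the principal difficulty.
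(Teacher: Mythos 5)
Your proposal follows essentially the same route as the paper's own proof: reduce to the fibre via Lemma \ref{lem:G}, pass from $\mathcal{L}$ to $\mathcal{L}^{+}$ via Lemma \ref{lem:relationL}, and identify the three conditions $\mathcal{L}^{+}(x^{i}\frac{\partial}{\partial x})A=0$ for $i=0,1,2$ with membership in $\mathcal{W}_{+}(V)$, the grading condition $l=s$, and $F_1 a_s=0$, respectively. The extra details you supply (the triangularity behind Lemma \ref{lem:relationL} and the vanishing of the $\varGamma^2_{(-1)}\bm{B}_{(0)}$ term via $\beta_{(0)}|_{\mathcal{W}_{+}(V)}=0$) are correct and only make explicit what the paper leaves implicit.
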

	
	Let $\mathcal{C}(V)$ be the subalgebra of $\mathcal{W}(V)$ generated by $$L(z)=:\beta(z)\partial\gamma(z):-:b(z)\partial{c}(z):,$$         and 
	$$G(z)=:b(z)\partial\gamma(z):.$$
	We have
	$[Q_{(0)}, G_{(1)}]=L_{(1)}$, with the definitions of $Q, L, G$ agree with Eq. (\ref{eqn:QLJG}).
	It is easy to check that $\mathcal{C}(V) \subset \mathcal{W}^{T}(V)$. 
	
	We conjecture that $\mathcal{C}(V) = \mathcal{W}^{T}(V)$.
	
	The Virasoro field $:\beta\partial\gamma:-:b\partial c: \in \mathcal{W}^{T}(V)$ corresponds naturally to the canonical Virasoro field in $H^0(X, \Omega_X^{ch})$, which exists for any complex manifold. The structure of $\mathcal{W}^{T}(V)$ is independent of the genus $g$ and the choice of complex structure by its construction.
	\subsection{ The space of global sections.}
	
	Let \begin{equation}\label{M2}
		M_2:= \bigoplus_{k}\bigoplus_{l>s} H^0(X, SW(\bar{T}^{*}X))[k, l, s],
		\end{equation}
		  where $H^0(X, SW(\bar{T}^{*}X))[k, l, s]$ is the space of holomorphic sections of an antiholomorphic vector bundle $SW(\bar{T}^{*}X)[k, l, s]$.
	
	Recall that $\mathcal{W}^{T}(V)$ is a sub vertex algebra of $\mathcal{W}(V)$ which consists of $\mathfrak{sl}_2$ invariants.
	
	 The subspace $M_2$ is naturally a module over $M_1$, where $M_1$ is defined by Eq. (\ref{M1}). And by the isomorphism of vertex algebras between $M_1$ and $\mathcal{W}^{T}(V)$, the subspace $M_2$ is a module over $\mathcal{W}^{T}(V)$. $M_2$ is not a vertex algebra.
	 
	 We derive a description of the space of global sections of $\Omega^{ch}_X$ on the closed complex curve $X$ with genus $g\ge2$ from the computations in the preceding parts:
	
	\begin{theorem}\label{thm:global}
		If $X$ is a closed complex curve with genus $g\geq 2$, define spaces $M_1$ and $M_2$ by Eq. (\ref{M1}) and (\ref{M2}), then the space of global sections:
	    \begin{equation}\label{eqn:sum}
	    	H^0(X, \Omega^{ch}_X) \cong M_1\oplus M_2,
	    \end{equation}
	    where $M_1 \cong \mathcal{W}^T(V)$ as vertex algebras. $M_1$ is a sub vertex algebra of $H^0(X, \Omega^{ch}_X)$ and $M_2$ is a module over $\mathcal{W}^T(V)$. 
	\end{theorem}
	
	\begin{definition}\label{def:weightspace}
		Let
	$$H^0(X,\Omega_X^{ch})[k ,l]:=\left\{a \in H^0(X, \Omega^{ch}_X)| L_{(1)}a=ka, J_{(0)}a=la \right\},$$
	$$H^0(X,\Omega_X^{ch})[k ,l, l]:=H^0(X,\Omega_X^{ch})[k ,l]\cap H^0(X, SW(\bar{T}^{*}X))[k, l, l],$$
		$$H^0(X, \Omega^{ch}_X)[k, l, \le s]:=\big\{ a\in H^0(X, \Omega^{ch}_X)[k, l]| \text{$a$ is a section of }\bigoplus_{i\ge 0} SW(\bar{T}^{*}X)[k, l, s-i]\big\},$$
		$$H^0_{+}(X, \Omega^{ch}_X)[k, l]:=H^0(X, \Omega^{ch}_X)[k, l, \le (l-1)].$$
		\end{definition}
		
		By Theorem \ref{thm:global}: $$H^0(X, \Omega^{ch}_X)[k, l]=H^0_{+}(X, \Omega^{ch}_X)[k, l]\oplus H^0(X, \Omega^{ch}_X)[k, l, l].$$ 
		
		We show a method of calculating $\dim H^0_{+}(X, \Omega^{ch}_X)[k, l]$ based on Theorem \ref{thm:global}.
		
			For any $k\geq 0$ and $l$, $s$ such that $l-s>0$, we have
		\begin{equation}\label{eq:dim}
			\dim H^0(X, \Omega^{ch}_X)[k, l, \le s]=\sum_{i\geq 0}\dim H^0(X, SW(\bar{T}^{*}X))[k, l, s-i],
		\end{equation}
		which follows from the exact sequences holding for any $s^{\prime}<l$:
		$$0\rightarrow H^0(X, \Omega^{ch}_X)[k,l, \le(s^{\prime}-1)]\rightarrow H^0(X, \Omega^{ch}_X)[k,l, \le s^{\prime}] \rightarrow H^0(X, SW(\bar{T}^{*}X))[k, l, s^{\prime}]\rightarrow 0.$$
		
		In particular, 	\begin{equation} \label{plus}
			\begin{aligned}
				\dim H^0_{+}(X, \Omega^{ch}_X)[k, l]  &=\dim H^0(X, \Omega^{ch}_X)[k,l, \le (l-1)] \\
				&=\sum_{i> 0}\dim H^0(X, SW(\bar{T}^{*}X))[k, l, l-i],
			\end{aligned}
		\end{equation}
		and $\dim H^0(X, SW(\bar{T}^{*}X))[k, l, l-i]$ is calculated by
			 $$\dim H^0(X, SW(\bar{T}^{*}X))[k, l, s]=\dim H^0(X, \bar{T}X^{\otimes n}\otimes \bar{T}^{*}X^{\otimes m})\dim SW^{\gamma}[k, l, s],$$where $n-m=l-s$.
			 
      In fact, using the Riemann-Roch formula (cf. [\cite{Ha}, IV, Theorem 1.3]), one can derive:
      	\[
      \dim H^0(X, \bar{T}X^{\otimes n}\otimes \bar{T}^{*}X^{\otimes m})=
      \begin{cases}
      	0, & n-m \le -1, \\
      	1, & n-m=0,  \\
      	g, & n-m=1, \\    
      	(n-m)(2g-2)+1-g, & n-m\ge 2.
      \end{cases}
      \]
      
      Let 
         	\begin{equation}\label{eqn:expand}
         		\begin{aligned}
         	H(q, y, z)&:=\prod^{\infty}_{n=1}(1-q^{n-1}y)(1-q^{n}y^{-1})\frac{1}{(1+q^{n}z)}\frac{1}{(1+q^{n}z^{-1})} \\
         	&=\sum_{m, m^{\prime} \in \mathbb{Z}, n\geq 0}a_{n,m,m^{\prime}}(-y)^m(-z)^{m^{\prime}}q^n,
         	\end{aligned}
         \end{equation}
      then   $\dim SW^{\gamma}[k, l, s]=a_{k,l,s}$.
	
	The dimensions of the first few weight spaces can be calculated directly. 
	
	If $k=0$, then $$\dim H^0(X, \Omega^{ch}_X)[0,1]=g,$$ and the dimensions of the other weight spaces are $0$.
	
	If $k=1$, then 
	\begin{equation*}
		\begin{aligned}
			\dim H^0(X, \Omega^{ch}_X)[1,0]&=g,    \\
			\dim H^0(X, \Omega^{ch}_X)[1,1]&=4g-3,  \\
			\dim H^0(X, \Omega^{ch}_X)[1,2]&=3g-3, 
		\end{aligned}
	\end{equation*}
	and the dimensions of the other weight spaces are $0$.
	
	If $k=2$, then
	\begin{equation*}
		\begin{aligned}
			\dim H^0(X, \Omega^{ch}_X)[2,-1]&=1, \\
			\dim H^0(X, \Omega^{ch}_X)[2,0]&=5g-2,    \\
			\dim H^0(X, \Omega^{ch}_X)[2,1]&=14g-11,  \\
			\dim H^0(X, \Omega^{ch}_X)[2,2]&=9g-8, 
		\end{aligned}
	\end{equation*}
	and the dimensions of the other weight spaces are $0$.
	
	We have seen that the dimensions of weight spaces depend on the genus $g$.

\end{document}